\title{\LARGE \bf
Growing Controllable Networks \\via Whiskering and Submodular Optimization
}
\newcommand{\startcompact}[1]{\par\vspace{-0.75em}\begin{#1}%
\allowdisplaybreaks\ignorespaces}
\newcommand{\stopcompact}[1]{\end{#1}\ignorespaces}
\author{Mathias Hudoba de Badyn and Mehran Mesbahi
\thanks{The research of the authors was supported by the U.S. Army Research
Laboratory and the U.S. Army Research Office under contract number W911NF-13-1-0340 and AFOSR grant  FA9550-16-1-0022. The authors are with the William E. Boeing Department of Aeronautics \& Astronautics, University of Washington, Seattle. Emails:
        {\tt\small \{hudomath, mesbahi\}@uw.edu}. \textcopyright~2016 IEEE}%
}
\begin{document} 
\newtheorem{lemma}{Lemma}
\newtheorem{theorem}[lemma]{Theorem}
\newtheorem{corollary}[lemma]{Corollary}
\newtheorem{definition}[lemma]{Definition}
\newtheorem{conjecture}[lemma]{Conjecture}
\newtheorem{remark}[lemma]{Remark}
\newtheorem{question}{Question}
\newcommand{\graph}{\mathcal{G}}
\newcommand{\redlap}{{L^{\{n-1\}}_\graph}}
\newcommand{\lap}{{L_\graph}}
\newcommand{\diri}{{\mathcal{A}_\graph}}
\newcommand{\Tr}{\text{Tr}}
\newcommand{\dirid}{\bar{\mathcal{A}}_\graph}

\maketitle
\thispagestyle{empty}
\pagestyle{empty}

\begin{abstract}
The topology of a network directly influences the behaviour and controllability of dynamical processes on that network.
Therefore, the design of network topologies is an important area of research when examining the control of distributed systems.
We discuss a method for growing networks known as whiskering, as well as generalizations of this process, and prove that they preserve controllability.
We then use techniques from submodular optimization to analyze optimization algorithms for adding new nodes to a network to optimize certain objectives, such as graph connectivity.

\end{abstract}

\section{INTRODUCTION}

A great amount of effort has recently been focused on understanding how the connection structure, or topology, of a network affects the behaviour or performance of a dynamical process on that network~\cite{Mesbahi2010}.
To that end, a natural question is how one can systematically construct a network topology such that a certain performance metric defined over that behaviour is satisfied.
A well-known method for constructing networks is of preferential attachment, where new nodes are attached to pre-existing nodes with a probability proportional to the degree of those nodes~\cite{Albert2002}.
The advantage of this method is that it produces networks with power-law degree distributions that resemble networks found in nature~\cite{Barabasi2009}.
Another method for growing networks uses Kolmogorov-Sinai entropy as a heuristic parameter for evolving networks~\cite{Demetrius2005}.

An area of recent focus is the study of controlling distributed systems~\cite{Mesbahi2010,Aguilar2014a,Boccaletti2006,Rahmani2009a,Chapman2015a,ORourke2015}. 
Some research pertains to how one may systematically construct a network that has favorable characteristics for consensus.
Ghosh and Boyd developed an algorithm to select connections between agents in a network to maximize the connectivity of the network \cite{Ghosh2006}.
Chapman and Mesbahi showed how to construct large networks from graph products of atomic networks and examined their controllability properties \cite{Chapman2013a, Chapman2014a}.
Yazicioglu and Egerstedt, and Abbas and Egerstedt worked on constructing networks for leader-follower selection~\cite{Yazicioglu2013, Abbas2011}.
Liu \emph{et al.} have discussed constructing graphs for scalable semi-supervised learning~\cite{Liu2010}.
When designing a network graph, there are several methods to achieve various performance characteristics.
For example, there has been recent work in using submodular optimization for picking input vectors~\cite{Cortesi2014,Summers2014}.
An excellent summary of submodular optimization applications to the control of networked systems is given in~\cite{Clark2016}.
Measures by which one may gauge network performance and specify network topologies have been suggested in~\cite{HudobaDeBadyn2015a,Siami2014}.

Whiskering is a process for growing graphs where at each iteration, a vertex and edge is connected to every node in the graph.
 In this paper, we discuss using this process, and generalizations of this process, to construct large graphs that are controllable. 
A process similar in spirit to whiskering are the `fractal' networks studied in the control-theoretic setting by Li \emph{et al.} in~\cite{Li2014a}

The contributions of this paper are as follows. 
We extend the use of submodular optimization in network science to problems involving adding \emph{nodes} to the network. 
We present a graph-growth method that preserves controllability of consensus on the graph, and provide relevant bounds on the network performance.
Lastly, we develop a graph-growth algorithm, and formulate convex optimization problems, which we then solve for specific test cases.

The paper is outlined as follows: in \S\ref{sec.prelim}, we present the terminology of graph theory, consensus dynamics and the relevant background and theorems in submodular optimization.
In \S\ref{sec.grow}, we discuss the graph whiskering process and its generalizations, and prove that they preserve controllability.
Optimization problems involving these processes are then formulated and discussed.
We implement algorithms to solve the optimization problems in \S\ref{sec.alg}.
The paper is concluded in \S\ref{sec.conc}, where future extensions of the work are discussed.

\section{MATHEMATICAL PRELIMINARIES}\label{sec.prelim}
This section consists of the relevant constructs we use later on for stating and proving our main results.

In this paper, we take the basic definitions for undirected graphs and matrices associated with them for granted; our graph theoretic notation is standard and can be found in~\cite{Mesbahi2010}.

 Let $m\in\mathbb{Z}_+$, and $[m]:=\{1,\dots,m\}$. 
We call a real-valued function $f:2^{[m]}\rightarrow\mathbb{R}$ \emph{nondecreasing} if for sets $J\subset K\subset [m]$, $f(J)\leq f(K)$.
The function $f$ is \emph{submodular} if for subsets $J,K\subset[m]$, we have that $  f(K) + f(J) \geq f(K\cup J) + f(K\cap J)$.
Furthermore, $f$ is \emph{nonincreasing} if $-f$ is nondecreasing, and $f$ is \emph{supermodular} if $-f$ is submodular.
$f$ is \emph{modular} if it is both supermodular and submodular.

A matrix $H$ is \emph{Hermitian} if $H=H^\dag$, where `$^\dag$' denotes the conjugate-transpose operation.
Let $\Lambda_E$ be a finite interval of $\mathbb{R}$, and thereby denote the set of $n\times n$ Hermitian matrices with eigenvalues contained in $\Lambda_E$ as $H_n(\Lambda_E)$.
We say that $f$ is \emph{operator monotone} on $H\left(\Lambda_E\right)$ if for all $n\geq 1$ and for all $A,B\in H_n(\Lambda_E)$, $  A\succeq B \implies f(A) \succeq f(B)$.
Lastly, let $A[K]$ denote the principal submatrix of $A$ obtained by deleting the rows and columns of $A$ corresponding to the elements in the set $[m]\setminus K$.

Recent works in submodular optimization have examined matrix functions (of say, $A$), such as the trace or the trace of powers of matrices, in the context of submodularity over a set $K\subseteq[n]$ on the principal submatrices $A[K]$~\cite{Friedland2013,Audenaert2010}.
We summarize the main results of these works, as well as a more specific result about submodularity of functions over principle sub-matrices of $M$-matrices.
\begin{theorem}[\cite{Friedland2013,Audenaert2010}]
  Let $f$ be a real continuous function on an interval $\Lambda_E$ of $\mathbb{R}$. 
Furthermore, let $f'$ be operator monotone on the interior of $H\left(\Lambda_E\right)$. Then, for all $A\in H_n(\Lambda_E)$, the map from $2^{[n]}\rightarrow\mathbb{R}$ given by $K\to \Tr f(A[K])$ is supermodular.
\end{theorem}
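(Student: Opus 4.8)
The plan is to reduce the statement, by means of L\"owner's integral representation of operator monotone functions, to a short list of elementary $f$ for which supermodularity is either immediate or classical. Since $f'$ is operator monotone on the interior of $\Lambda_E$, L\"owner's theorem provides a representation of $f'$ as an affine function plus an ``average of resolvents'', $f'(x)=\alpha+\beta x+\int_{\mathbb{R}\setminus\mathrm{int}(\Lambda_E)}\big(\tfrac{1}{t-x}-\tfrac{t}{1+t^2}\big)\,d\nu(t)$, with $\beta\geq 0$ and $\nu\geq 0$ supported off the interior of $\Lambda_E$ (and with no boundary atom, by continuity of $f$). Integrating in $x$ and collecting all affine-in-$x$ pieces (including the $-t/(1+t^2)$ corrections) into one affine function yields $f(x)=c_0+c_1x+\tfrac{\beta}{2}x^2-\int_{t>\sup\Lambda_E}\log(t-x)\,d\nu(t)-\int_{t<\inf\Lambda_E}\log(x-t)\,d\nu(t)$. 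Because the eigenvalues of any $A[K]$ interlace those of $A$ (Cauchy interlacing), they remain in $\Lambda_E$, so the positive definite matrices $tI-A[K]$ (for $t>\sup\Lambda_E$) and $A[K]-tI$ (for $t<\inf\Lambda_E$) appearing below are well defined.

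Applying this termwise to $\Tr f(A[K])$ — pulling the trace and spectral calculus through the $d\nu$-integral by linearity and Fubini — the constant and affine pieces give $c_0|K|+c_1\Tr A[K]$, which is \emph{modular} in $K$; the quadratic piece gives $\tfrac{\beta}{2}\Tr(A[K]^2)=\tfrac{\beta}{2}\sum_{k,\ell\in K}|A_{k\ell}|^2$, which is supermodular since $\beta\geq 0$; and each logarithmic piece gives a nonnegative multiple of $-\log\det\big((tI-A)[K]\big)$ or $-\log\det\big((A-tI)[K]\big)$, integrated against $\nu\geq 0$. Since the cone of supermodular set functions is closed under nonnegative combinations and integration and contains the modular functions, the theorem reduces to one classical fact: $K\mapsto\log\det(M[K])$ is \emph{submodular} for every positive definite $M$.

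For that base case, by the incremental (diminishing-returns) characterization of submodularity it suffices to show, for distinct $i,j\notin S$, that $\log\det(M[S\cup\{i,j\}])-\log\det(M[S\cup\{i\}])-\log\det(M[S\cup\{j\}])+\log\det(M[S])\leq 0$. Writing the left side as $\log$ of the ratio $\det(M[S\cup\{i,j\}])\det(M[S])\big/\big(\det(M[S\cup\{i\}])\det(M[S\cup\{j\}])\big)$ and using the Schur-complement factorization $\det(M[T\cup U])=\det(M[T])\,\det(M[T\cup U]/M[T])$, this ratio equals $1-|\gamma|^2/(\alpha\delta)$, where $\left(\begin{smallmatrix}\alpha&\gamma\\\bar\gamma&\delta\end{smallmatrix}\right)=M[S\cup\{i,j\}]/M[S]$ is positive definite (a Schur complement inside a positive definite matrix). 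Hence $\alpha,\delta>0$ and $0<\alpha\delta-|\gamma|^2$, so the argument of the logarithm lies in $(0,1]$ and the inequality follows.

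The only real obstacle is the analytic housekeeping behind L\"owner's representation on a \emph{general} finite interval $\Lambda_E$: fixing the support of $\nu$, excluding boundary atoms by continuity of $f$, and justifying the interchange of the trace/spectral calculus with the $d\nu$-integral. A route that sidesteps L\"owner's theorem is to attack the incremental inequality for $\Tr f(A[K])$ directly: reduce to $B:=A[S\cup\{i,j\}]$; form the two-parameter family $M(p,q)$ obtained from $B$ by scaling node $i$'s couplings to the rest by $p$, node $j$'s by $q$, and the $(i,j)$-coupling by $pq$; check that $M(p,q)\in H_{|S|+2}(\Lambda_E)$ for $(p,q)\in[0,1]^2$ (a congruence-plus-positive-diagonal argument); and show $\partial_p\partial_q\Tr f(M(p,q))\geq 0$ by expanding with the Daleckii--Krein formula, where positive semidefiniteness of the L\"owner matrix $\big[(f')^{[1]}(\lambda_a,\lambda_b)\big]$ of divided differences — which is equivalent to operator monotonicity of $f'$ — is exactly what pins the sign. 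The fiddly part of that alternative is handling the residual term $\Tr\big(f'(M(p,q))\,W\big)$ arising because $\partial_p\partial_q M(p,q)=W\neq 0$, with $W$ the $(i,j)$-coupling.
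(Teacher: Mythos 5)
The paper does not actually prove this statement---it is imported verbatim from the cited references and used as a black box---so there is no in-paper argument to compare against. Judged on its own, your proof is correct in substance and in fact follows essentially the route of Friedland--Gaubert and Audenaert--Hiai--Petz: the L\"owner/Nevanlinna representation of the operator monotone function $f'$ decomposes $f$ into an affine part (modular after tracing), a nonnegative multiple of $x^2$ (whose trace $\tfrac{\beta}{2}\sum_{k,\ell\in K}|A_{k\ell}|^2$ is supermodular because the only terms gained in passing to $K\cup J$ beyond those of $K$, $J$, $K\cap J$ are the nonnegative cross terms with $k\in K\setminus J$, $\ell\in J\setminus K$), and a nonnegative mixture of $-\log(t-x)$ over $t\notin\mathrm{int}\,\Lambda_E$, reducing everything to submodularity of $K\mapsto\log\det(M[K])$ for $M\succ 0$. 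Your Schur-complement verification of that base case is a correct proof of the Koteljanskii (generalized Hadamard--Fischer) inequality, and the interlacing remark correctly keeps the spectra of all $A[K]$ inside $\Lambda_E$ so that each $tI-A[K]$ is definite.

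One step is stated too casually. You cannot in general ``collect the $-t/(1+t^2)$ corrections into one affine function'': the representing measure need only satisfy $\int(1+t^2)^{-1}\,d\nu(t)<\infty$, so $\int|t|(1+t^2)^{-1}\,d\nu(t)$ may diverge (e.g.\ $f'(x)=\log x$ on a finite subinterval of $(0,\infty)$ has $\nu$ equal to Lebesgue measure on a half-line), and the two halves of the compensated integrand are then not separately $\nu$-integrable. The repair stays entirely within your framework: integrate the compensated integrand as a whole (after integration in $x$ it is $O(t^{-2})$ at infinity, hence $\nu$-integrable), observe that for each fixed $t$ the resulting kernel equals $-\log(t-x)$ plus a function affine in $x$, and note that affine-in-$x$ kernels contribute only modular set functions to $K\mapsto\Tr f(A[K])$ and therefore cancel identically in the supermodular difference \emph{before} any splitting of the $d\nu$-integral is attempted. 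With that adjustment, and the routine boundary checks you already flag, the argument is complete; the alternative Daleckii--Krein route you sketch at the end would also work but is only an outline as written.
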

\begin{theorem}[\cite{Friedland2013}]\label{thm.supermod}
  Let $A$ be an $M$-matrix of size $n\times n$. Then, for all subsets $J,K\subset[n]$ we have that for $0\leq p\leq1$:
  \begin{align}
    \Tr A[K]^p + \Tr A[J]^p \geq \Tr A[K\cup J]^p + \Tr A[K\cap J]^p
  \end{align}
and for $1\leq p \leq 2$ and for $p<0$,
  \begin{align}
    \Tr A[K]^p + \Tr A[J]^p \leq \Tr A[K\cup J]^p + \Tr A[K\cap J]^p.
  \end{align}
In particular, the map $K\to \Tr A[K]^{-1}$ is supermodular if $A$ is an $M$-matrix.
\end{theorem}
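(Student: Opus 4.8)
The plan is to exploit the defining structure of a (nonsingular) $M$-matrix: one may write $A = sI - N$ with $N \geq 0$ entrywise and $s > \rho(N)$, the spectral radius of $N$ (nonsingularity is in any case forced when $p<0$, and is harmless otherwise by a limiting argument). After expanding $A[K]^p$ as a generalized Neumann series in $N[K]$ and taking traces, the whole statement collapses onto the \emph{elementary} supermodularity of set-containment indicators $K \mapsto \mathbf{1}[T \subseteq K]$, together with sign bookkeeping for binomial coefficients. No operator monotonicity is needed; indeed the criterion ``$f'$ operator monotone'' does not apply to $f(x)=x^{-1}$, since $f'(x) = -x^{-2}$ is not operator monotone, so the $p=-1$ case genuinely requires the $M$-matrix hypothesis.

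First I would record that every principal submatrix $A[K]$ is again a nonsingular $M$-matrix with $A[K] = sI - N[K]$, and that $\rho(N[K]) \leq \rho(N) < s$ by monotonicity of the Perron root under passing to principal submatrices of a nonnegative matrix. Hence $N[K]/s$ has spectral radius below $1$; the binomial series $(I - N[K]/s)^p = \sum_{k\geq 0}\binom{p}{k}(-1)^k (N[K]/s)^k$ converges absolutely in operator norm (for $p > 0$ the scalars $|\binom{p}{k}|$ are themselves summable; for $p < 0$ they grow only polynomially in $k$ while $(N[K]/s)^k \to 0$ geometrically), and it sums to the principal-branch power $s^{-p}A[K]^p$, because the spectrum of $A[K]$ lies in the open right half-plane and so avoids the branch cut. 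Taking traces term by term yields the uniform identity
\begin{align}
\Tr A[K]^p = \sum_{k\geq 0} c_k(p)\, \Tr N[K]^k, \qquad c_k(p) := \binom{p}{k}(-1)^k s^{\,p-k},
\end{align}
which is a finite expansion when $p \in \{0,1,2\}$.

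Next I would analyze the two sides. On the matrix side, for $k \geq 1$ one has the walk expansion
\begin{align}
\Tr N[K]^k = \sum_{(i_1,\dots,i_k)\in[n]^k} \Big(\prod_{l=1}^{k} N_{i_l i_{l+1}}\Big)\, \mathbf{1}\big[\{i_1,\dots,i_k\}\subseteq K\big]
\end{align}
(indices cyclic); the weights $\prod_l N_{i_l i_{l+1}}$ are nonnegative since $N \geq 0$, and each $K \mapsto \mathbf{1}[T \subseteq K] = \prod_{i\in T}\mathbf{1}[i\in K]$ is supermodular, being the indicator of an up-set in $2^{[n]}$. Hence $K \mapsto \Tr N[K]^k$ is supermodular for every $k$, and it is even modular for $k \in \{0,1\}$ (namely $\Tr N[K]^0 = |K|$ and $\Tr N[K]^1 = \sum_{i\in K} N_{ii}$). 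On the scalar side, $s^{\,p-k} > 0$, so $\operatorname{sign} c_k(p) = \operatorname{sign}\big(\binom{p}{k}(-1)^k\big)$; inspecting the sign of $p(p-1)\cdots(p-k+1)$ for $k \geq 2$ gives $c_k(p) > 0$ for every $p < 0$ and every $1 < p < 2$, while $c_k(p) < 0$ for every $0 < p < 1$.

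Assembling these finishes the proof. A conic combination of supermodular functions is supermodular, and adding a modular function affects neither sub- nor supermodularity; so, peeling off the modular terms $k = 0,1$, the tail $\sum_{k\geq 2} c_k(p)\Tr N[K]^k$ is supermodular when $p < 0$ or $1 < p < 2$ and submodular when $0 < p < 1$, and $K \mapsto \Tr A[K]^p$ inherits this (with $p \in \{0,1\}$ modular and $p=2$ supermodular by the finite expansion). Unwinding the definitions, submodularity of $K \mapsto \Tr A[K]^p$ is precisely the first displayed inequality and supermodularity is precisely the second; in particular $p = -1$ gives the asserted supermodularity of $K \mapsto \Tr A[K]^{-1}$. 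The one step carrying real content --- and the only place the $M$-matrix hypothesis enters --- is setting up the Neumann/binomial series: one must pin down its absolute convergence and its identification with the principal-branch $p$-th power. Once that is done, everything else, including the fully nonsymmetric case (the walk weights $\prod_l N_{i_l i_{l+1}}$ stay nonnegative), is routine bookkeeping.
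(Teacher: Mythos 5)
The paper does not prove this statement at all: Theorem~\ref{thm.supermod} is imported verbatim from Friedland and Gaubert~\cite{Friedland2013}, so there is no in-paper argument to compare against. Your proof is, as far as I can check, correct and self-contained, and it reconstructs essentially the standard route to this result: write $A=sI-N$ with $N\geq 0$ and $s>\rho(N)$, expand $s^{-p}A[K]^p$ by the matrix binomial series, reduce to the supermodularity of $K\mapsto \Tr N[K]^k$ via the closed-walk expansion with nonnegative weights, and track the sign of $\binom{p}{k}(-1)^k$ on each parameter range (negative for $0<p<1$ and $k\geq 2$, positive for $1<p<2$ and for $p<0$). Your observation that Theorem~1 of the paper cannot deliver the $p=-1$ case, because $-x^{-2}$ is not operator monotone, correctly explains why the $M$-matrix hypothesis is doing real work here. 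Two small points to tighten. First, your justification that $K\mapsto\mathbf{1}[T\subseteq K]$ is supermodular ``being the indicator of an up-set'' is misleading: indicators of general up-sets are \emph{not} supermodular (take the up-set generated by two incomparable sets $T_1,T_2$ and evaluate at $K=T_1$, $J=T_2$); what you need, and what is true, is that the indicator of the \emph{principal} up-set $\{K: T\subseteq K\}$ is supermodular, which is a one-line case check. Second, the limiting argument for singular $M$-matrices deserves a sentence of care, since $z\mapsto z^p$ is not holomorphic at $0$ and $\Tr A[K]^p$ needs a definition there; for the application in this paper (grounded Laplacians, $p=-1$) only the nonsingular case is used, so this is cosmetic.
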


\section{GRAPH GROWING}\label{sec.grow}
In this section, we introduce a method of growing graphs that preserves controllability, by adding a leaf to every node of the graph. 
We then generalize this process to adding more complicated structures.

\begin{figure}
  \centering
  \includegraphics[scale=0.4]{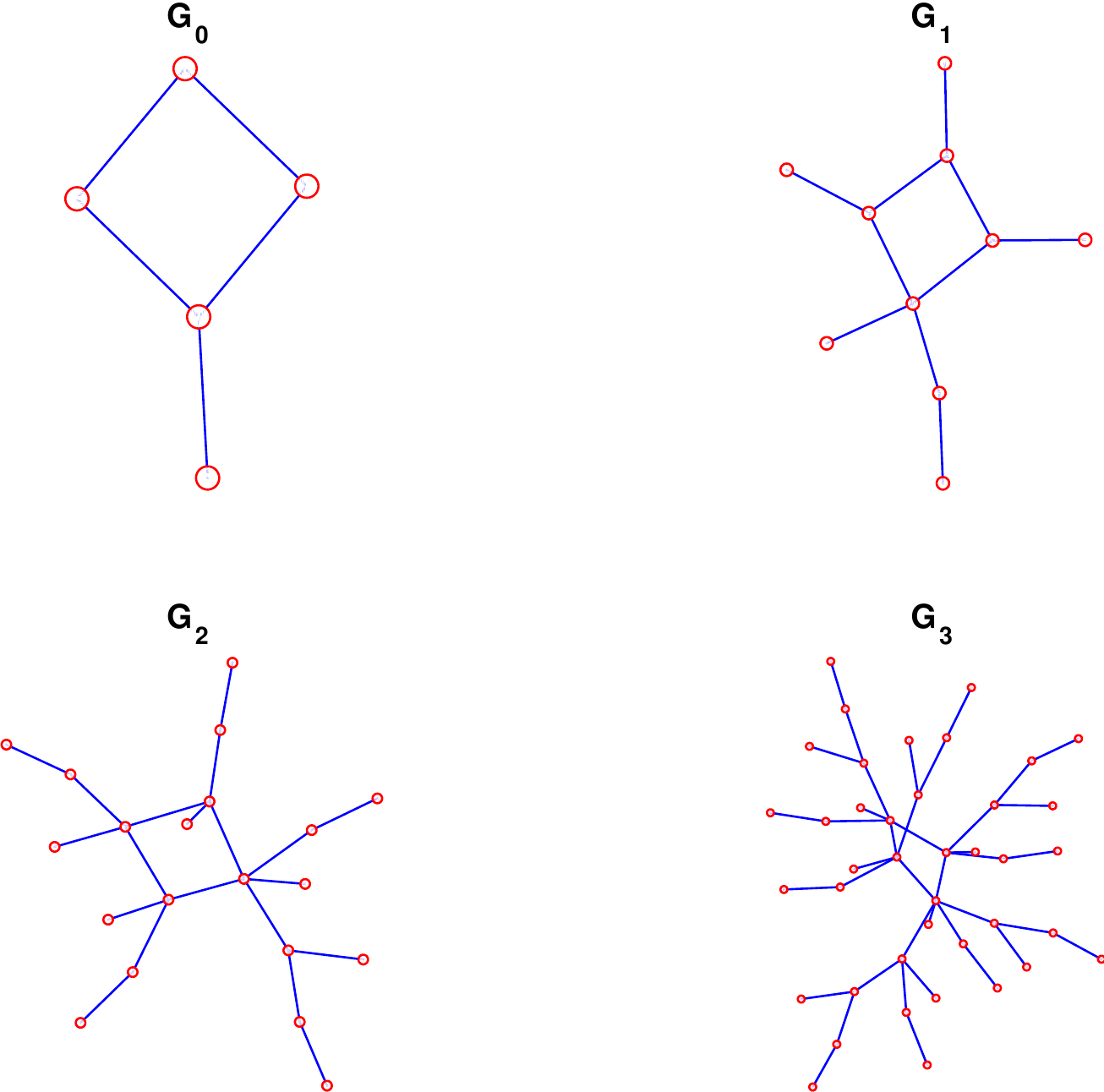}
  \caption{Whiskering a graph by adding a leaf to every node}
  \label{fig.whisker}
\end{figure}
Graph whiskering is a process for adding nodes to a graph, originally studied for the purpose of looking at monomial ideals~\cite{Biermann2013}.
For each whiskering iteration, a single unique node is connected to an already existing node in the graph, for all nodes in the graph.
This corresponds to concatenating the Laplacian in the following form:
\begin{align}
  L \longrightarrow \left[\begin{array}{cc}
      L + I&-I\\-I&I
    \end{array} \right] = L',\label{eq.whisker}
\end{align}
where we denote the operation as $L'=\mathscr{W}_1(L)$.
Figure~\ref{fig.whisker} shows an example of whiskering a graph three times.
This process is of great interest because of several properties that make it useful for control theoretic analysis, namely it preserves controllability and provides guarantees on the performance of control exerted on the resulting network.
The former property is captured in the following theorem.
\begin{theorem}\label{thm.whisker}
  Let $L'= \mathscr{W}_1(L)$. The pairs $(L',[b^T,b^T]^T)$ and $(L',[b^T,\mathbf{0}^T]^T)$ are controllable if and only if the pair $(L,b)$ is controllable.
\end{theorem}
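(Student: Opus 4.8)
The plan is to reduce the controllability of the enlarged pair to that of the original pair via the Popov–Belevitch–Hautus (PBH) test, exploiting the block structure of $L'$ in~\eqref{eq.whisker}. Recall that $(L',c)$ is controllable if and only if there is no left eigenvector $v$ of $L'$ with $v^T c = 0$; equivalently, for every eigenvalue $\mu$ of $L'$, the matrix $[\,\mu I - L' \ \ c\,]$ has full row rank. Writing a candidate eigenvector of $L'$ as $v = [v_1^T, v_2^T]^T$ conformably with the $2\times 2$ block partition, the eigenvalue equation $L' v = \mu v$ becomes the pair $(L+I)v_1 - v_2 = \mu v_1$ and $-v_1 + v_2 = \mu v_2$. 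From the second equation, $v_1 = (1-\mu) v_2$, so either $\mu = 1$ (forcing $v_1 = 0$ and $v_2$ free, i.e. $v_2 \in \ker(\text{nothing})$, but then the first equation gives $-v_2 = v_2$, so in fact $\mu=1$ yields no nonzero solution unless $v_2=0$) — I will need to track this degenerate case carefully — or $\mu \neq 1$, in which case substituting back shows $v_2$ is (up to scaling) an eigenvector of $L$ with eigenvalue $\lambda$ related to $\mu$ by the quadratic $\lambda = \mu + 1 - \tfrac{1}{1-\mu}$, equivalently $(\mu - \lambda - 1)(\mu - 1) = -1$, i.e. $\mu^2 - (\lambda+2)\mu + (\lambda+1) = 0$ whose roots are $\mu = 1$ and $\mu = \lambda+1$. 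So every eigenvector of $L'$ with eigenvalue $\mu \ne 1$ corresponds to an eigenvector $v_2$ of $L$ with $L v_2 = (\mu-1)v_2$, and $v_1 = (1-\mu)v_2$.

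With this eigenstructure in hand, I would test each of the two input vectors. For $c = [b^T, b^T]^T$: if $v = [v_1^T, v_2^T]^T$ is a left eigenvector of $L'$ (which, since $L'$ is symmetric, is the same as a right eigenvector), then $v^T c = v_1^T b + v_2^T b = (v_1 + v_2)^T b$. Using $v_1 = (1-\mu)v_2$ gives $v^T c = (2-\mu) v_2^T b = (2 - (\lambda+1)) v_2^T b = (1-\lambda) v_2^T b$ in the generic case, plus the separate analysis at $\mu=1$. Thus $v^T c = 0$ forces either $v_2^T b = 0$ — i.e. $v_2$ is an eigenvector of $L$ orthogonal to $b$, contradicting controllability of $(L,b)$ — or $\lambda = 1$; the latter I would have to rule out by checking directly that at the eigenvalue(s) of $L'$ produced by $\lambda=1$ the PBH matrix still has full rank, or by noting $v_2^T b$ cannot vanish when $(L,b)$ is controllable and handling the multiplicity bookkeeping. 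Conversely, if $(L,b)$ is not controllable, an offending eigenvector $v_2$ of $L$ lifts to an offending eigenvector of $L'$, breaking controllability of both enlarged pairs. For $c = [b^T, \mathbf{0}^T]^T$: $v^T c = v_1^T b = (1-\mu) v_2^T b$, which vanishes iff $\mu = 1$ or $v_2^T b = 0$; again the $v_2^T b=0$ branch contradicts controllability of $(L,b)$, and the $\mu=1$ branch must be dispatched by the degenerate-case analysis — here one must verify there is no genuine left eigenvector at $\mu=1$ that is orthogonal to $[b^T,\mathbf 0^T]^T$, which follows once we show the only eigenvector structure at $\mu=1$ is already covered or absent.

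The main obstacle I anticipate is the bookkeeping around the eigenvalue $\mu = 1$ (and, relatedly, $\lambda = 1$ for $L$, and the possibility of repeated eigenvalues of $L$ creating eigenspaces rather than single vectors). At $\mu = 1$ the substitution $v_1 = (1-\mu)v_2$ collapses, so the block equations must be re-examined from scratch: $\mu=1$ gives $L v_1 = v_2$ and $v_1 = 0$... actually $-v_1 + v_2 = v_2 \Rightarrow v_1 = 0$, then $(L+I)\cdot 0 - v_2 = 0 \Rightarrow v_2 = 0$, so $\mu=1$ is genuinely not an eigenvalue of $L'$ with this structure — but I would double-check against dimension counts and the appearance of $\lambda = 0$ (the Laplacian's kernel direction $\mathbf 1$) to be sure no eigenvector is lost. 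The cleanest write-up is probably: (i) establish the eigenvalue/eigenvector correspondence between $L$ and $L'$ via the quadratic above, (ii) note $L'$ is symmetric so left and right eigenvectors coincide, (iii) run the PBH test for each $c$ using the explicit form of the eigenvectors, reducing $v^Tc=0$ to $v_2^T b = 0$, and (iv) invoke PBH for $(L,b)$. The "if and only if" then falls out of the correspondence being a bijection on eigenvectors (up to the trivial cases), so the lifting works in both directions.
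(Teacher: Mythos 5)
Your overall strategy is the same as the paper's: apply the PBH test and use the $2\times2$ block structure of $L'$ to set up a correspondence between eigenvectors of $L'$ and eigenvectors of $L$, with $v_1=(1-\mu)v_2$ coming from the second block row. The genuine gap is in the algebra of that correspondence. Substituting $v_1=(1-\mu)v_2$ into $(L+I)v_1-v_2=\mu v_1$ and dividing by $1-\mu$ gives $Lv_2=\bigl(\mu-1+\tfrac{1}{1-\mu}\bigr)v_2$, i.e.
\begin{align}
\mu^2-(\lambda+2)\mu+\lambda=0,\qquad \mu_{\pm}=\tfrac{1}{2}\bigl(\lambda+2\pm\sqrt{\lambda^2+4}\bigr),
\end{align}
which is exactly the $\Lambda$ appearing in the paper's construction --- not $\mu^2-(\lambda+2)\mu+(\lambda+1)=0$ with roots $\{1,\lambda+1\}$ as you wrote. (Quick checks: the product of the two roots must equal $\lambda$, not $\lambda+1$; and for $L=[0]$ the matrix $L'$ is the Laplacian of $K_2$, whose spectrum is $\{0,2\}$, not $\{1,1\}$.) Your separate verification that $\mu=1$ is never an eigenvalue of $L'$ is correct and consistent with the corrected quadratic, and with it the $[b^T,\mathbf{0}^T]^T$ case does close: $v^Tc=v_1^Tb=(1-\mu)v_2^Tb$ with $1-\mu\neq0$, so orthogonality of $v$ to $[b^T,\mathbf{0}^T]^T$ is equivalent to $v_2^Tb=0$, and the eigenvector correspondence finishes both directions.

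The more serious consequence of the error is in the $[b^T,b^T]^T$ case, where you compute $v^Tc=(2-\mu)v_2^Tb$ and defer the branch where the scalar coefficient vanishes. With the corrected roots, $2-\mu=0$ happens exactly when $\lambda=0$ and $\mu=\mu_+=2$ --- and $\lambda=0$ is \emph{always} an eigenvalue of a Laplacian, with eigenvector $\mathbf{1}$. The resulting eigenvector $[-\mathbf{1}^T,\mathbf{1}^T]^T$ of $L'$ (eigenvalue $2$) is orthogonal to $[b^T,b^T]^T$ for every $b$, so this branch cannot be ``dispatched'': the PBH test fails there unconditionally. Concretely, $L=[0]$, $b=1$ gives a controllable pair $(L,b)$, yet for $c=[1,1]^T$ the controllability matrix $[\,c\ \ L'c\,]$ has rank one. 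So the deferred case is not bookkeeping; it is precisely where the implication ``$(L,b)$ controllable $\Rightarrow$ $(L',[b^T,b^T]^T)$ controllable'' breaks down, and a correct write-up must either restrict to the $[b^T,\mathbf{0}^T]^T$ input or flag this obstruction. (The paper's own reverse-direction argument avoids seeing it only by splitting the single orthogonality condition $w_1^Tb_1+w_2^Tb_2=0$ into the two componentwise conditions of Equation~\eqref{eq.ortho}, which is not a valid step when $b_2\neq\mathbf{0}$.)
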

\begin{proof}
  We prove the contrapositive using the Popov-Belevitch-Hautus (PBH) test~\cite{Mesbahi2010}. 
Suppose that $(L,b)$ is uncontrollable. 
Then, there exists $w$ such that $w^Tb=0$ and $w^TL=\lambda L$. 
We show by construction that there exists a left eigenvector of $L'$ that is orthogonal to the columns of $[b^T,b^T]^T$ and of $[b^T,\mathbf{0}^T]$.
We claim that $[ {w}^T,~ {\alpha}^T]^T$ is an eigenvector of $L'$ with eigenvalue $\Lambda$, where 
  \begin{align}
   {\alpha} &= \dfrac{1}{1-\Lambda} {w},~ \Lambda = \frac{1}{2} \left(\sqrt{\lambda ^2+4}+\lambda +2\right).
  \end{align}
The Laplacian is symmetric, and so its left eigenvectors are transposed right eigenvectors.
Therefore, a computation yields 
\begin{align}
 &\left[\begin{array}{cc}
      L + I&-I\\-I&I
    \end{array} \right] \left[
                    \begin{array}{c}
                       {w} \\   {\alpha}
                    \end{array} \right]= \left[
                                         \begin{array}{c}
                                           (L+I) {w} - I {\alpha}\\
                                           I {\alpha} - I {w}
                                         \end{array}\right]\\
&= \left[ 
  \begin{array}{c}
    (\lambda+1) {w} -  {\alpha}\\
 {\alpha} -  {w}
  \end{array}\right]=\left[
  \begin{array}{c}
    \Lambda {w}\\
{\Lambda} {\alpha}
  \end{array} \right].
\end{align}
This is orthogonal to the columns of $[b^T,b^T]^T$ and $[b^T,\mathbf{0}^T]^T$.

For the reverse direction, assume that $(L',b=[b_1^T,b_2^T]^T)$ is uncontrollable. 
Then by the PBH test, we have an eigenvector of $L'$ orthogonal to the columns of $b$
: 
\startcompact{small}
\begin{align}
 &L' = \lambda\left[
                    \begin{array}{c}
                       {w}_1\\ {w}_2
                    \end{array}\right]=  \left[\begin{array}{c}
      L {w}_1 + ( {w}_1- {w}_2)\\ {w}_2- {w}_1
    \end{array} \right]\\
  &[w_1^T,w_2^T]\left[
  \begin{array}{c}
    b_1\\b_2
  \end{array}\right] = \left[ 
  \begin{array}{c}
    w_1^Tb_1\\w_2^Tb_2
  \end{array}\right] = \left[ 
  \begin{array}{c}
    \mathbf{0}\\\mathbf{0}
  \end{array}\right].\label{eq.ortho}
\end{align}
 \stopcompact{small}
It follows that 
\begin{align}
w_2&=\dfrac{1}{1-\lambda}w_1,~Lw_1= \left(\dfrac{1-(1-\lambda)^2}{1-\lambda}\right)w_1,
\end{align}
and so $w_1$ is an eigenvector of $L$ with eigenvalue $(1-(1-\lambda)^2)(1-\lambda)^{-1}$.
It it is clear that from Equation~\eqref{eq.ortho}, $w_1^Tb_1=\mathbf{0}$, and so $(L,b_1)$ is uncontrollable.
This result also holds when $b_2=\mathbf{0}$, and the theorem follows.
\end{proof}
Theorem~\ref{thm.whisker} therefore provides a useful way of ensuring that a large graph is controllable.
Since rank controllability tests for very large graphs are often inaccurate due to machine precision, one can start with a small graph on which a controllability test is easily performed and iterate this process several times until a sufficiently large network is obtained.

A natural question to ask is what other types of growth processes preserve controllability?
For example, one can attach a leaf and a path of length 2 to every node (as shown in Figure~\ref{fig.pathwhisk}).
This corresponds to concatenating the Laplacian as follows: 
\begin{align}
  L\longrightarrow \left[
  \begin{array}{cccc}
    L+2I&-I&-I&\mathbf{0}\\
    -I&I&\mathbf{0}&\mathbf{0}\\
    -I&\mathbf{0}&2I&-I\\
    \mathbf{0}&\mathbf{0}&-I&I
  \end{array}\right]=L'.\label{eq.whiskerpath}
\end{align}
We denote $L'=\mathscr{W}_2(L)$. It turns out that this growth process also preserves controllability.
\begin{figure}
\centering
\includegraphics[scale=0.9]{./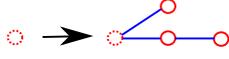}
\caption{Adding a leaf and a path of length 2 to every node}
\label{fig.pathwhisk}
\end{figure}

\begin{theorem}\label{thm.path}
 Let $L'= \mathscr{W}_2(L)$. Then, it follows that the pairs $(L',[b^T,b_i^T,b_i^T,b_i^T]^T)$ (where $b_i\in\{b,\mathbf{0}\}$) are controllable if and only if the pair $(L,b)$ is controllable.
\end{theorem}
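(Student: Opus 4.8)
The plan is to mimic the structure of the proof of Theorem~\ref{thm.whisker}: argue both directions via the PBH test, explicitly constructing a left eigenvector of $L'$ that witnesses uncontrollability. Since $L' = \mathscr{W}_2(L)$ is symmetric, left eigenvectors are again transposed right eigenvectors, so I will work with right eigenvectors throughout. Partition any candidate eigenvector of $L'$ as $[w_1^T, w_2^T, w_3^T, w_4^T]^T$ conforming to the $4 \times 4$ block structure in \eqref{eq.whiskerpath}.

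For the forward direction, suppose $(L,b)$ is uncontrollable, so there is $w \neq 0$ with $w^T b = 0$ and $Lw = \lambda w$. The guess is an eigenvector of the form $w_1 = w$, $w_2 = \beta w$, $w_3 = \gamma w$, $w_4 = \delta w$ for scalars $\beta,\gamma,\delta$ depending on $\lambda$ and the new eigenvalue $\Lambda$. Writing out $L'[w_1^T,w_2^T,w_3^T,w_4^T]^T = \Lambda[w_1^T,w_2^T,w_3^T,w_4^T]^T$ block by block gives, after cancelling the common factor $w$, a $4\times 4$ scalar eigenvalue problem: $(\lambda+2)-\beta-\gamma = \Lambda$, $1-\beta = \Lambda/\beta$ style relations — more precisely $\beta(1-\Lambda) = 1$, $-1 + 2\gamma - \delta = \Lambda\gamma$, and $\gamma(1-\Lambda)$-type relation for $w_4$, namely $\delta(1-\Lambda) = \gamma$. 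One solves the last three for $\beta,\gamma,\delta$ in terms of $\Lambda$ and substitutes into the first to get a polynomial in $\Lambda$ whose roots are the admissible new eigenvalues; since this scalar matrix has real spectrum and at least one eigenvalue giving finite $\beta,\gamma,\delta$ (i.e.\ $\Lambda \neq 1$), a valid $\Lambda$ exists. The resulting vector is orthogonal to each of the three input columns $[b^T, b_i^T, b_i^T, b_i^T]^T$ because every block is a multiple of $w$ and $w^T b = 0$ (and $w^T \mathbf{0} = 0$ when $b_i = \mathbf{0}$), so $(L', \cdot)$ is uncontrollable.

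For the reverse direction, assume $(L', [b^T,b_i^T,b_i^T,b_i^T]^T)$ is uncontrollable; PBH gives an eigenvector $[w_1^T,w_2^T,w_3^T,w_4^T]^T$ with eigenvalue $\lambda$, orthogonal to the input. The three scalar block equations not involving $L$ — namely $w_2 - w_1 = \lambda w_2$, $-w_1 + 2w_3 - w_4 = \lambda w_3$, $-w_3 + w_4 = \lambda w_4$ — let me solve $w_2, w_3, w_4$ as rational multiples of $w_1$ (explicitly $w_2 = (1-\lambda)^{-1}w_1$, then $w_4 = (1-\lambda)^{-1}w_3$, then $w_3$ in terms of $w_1$), provided $\lambda \neq 1$; the degenerate case $\lambda = 1$ has to be checked separately and ruled out (it forces $w_1 = 0$ and then the whole vector vanishes). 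Substituting back into the first block equation $Lw_1 + 2w_1 - w_2 - w_3 = \lambda w_1$ shows $w_1$ is an eigenvector of $L$, and the orthogonality relation forces $w_1^T b = 0$ (using that at least the first block of the input is $b$, not $\mathbf{0}$), so $(L,b)$ is uncontrollable.

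The main obstacle is bookkeeping in the scalar $4\times4$ eigenvalue computation in the forward direction: unlike the $2\times 2$ case in Theorem~\ref{thm.whisker}, the polynomial relating $\Lambda$ to $\lambda$ is higher degree, and one must confirm that a suitable real root with $\Lambda \neq 1$ always exists so that $\beta,\gamma,\delta$ are well-defined — this is where I would need to be careful rather than hand-wave. A clean way to sidestep the explicit root-finding is to observe that the map $w \mapsto [w^T, \beta w^T, \gamma w^T, \delta w^T]^T$ identifies the relevant invariant subspace of $L'$ with a copy of the $L$-eigenspace tensored against the $4\times 4$ scalar block matrix, so the new spectrum is exactly $\{g(\lambda_i)\}$ for the appropriate algebraic function $g$, and uncontrollability transfers through this correspondence; I would present the explicit vector but lean on this structural observation to avoid grinding through the quartic.
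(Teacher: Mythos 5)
Your proposal follows the paper's proof of this theorem essentially step for step: both directions via PBH, the forward direction via an ansatz in which every block of the candidate eigenvector of $L'$ is a scalar multiple of $w$, and the reverse direction by eliminating $w_2,w_3,w_4$ from the three block equations not involving $L$. Your closing structural remark is in fact an improvement on the paper: since the span of $[w^T,0,0,0]^T,\dots,[0,0,0,w^T]^T$ is $L'$-invariant and the restriction of $L'$ to it is the real symmetric matrix $\left[\begin{smallmatrix}\lambda+2&-1&-1&0\\-1&1&0&0\\-1&0&2&-1\\0&0&-1&1\end{smallmatrix}\right]$, you get four real eigenvalues with a full set of eigenvectors, each lifting to an eigenvector of $L'$ whose blocks are multiples of $w$; this disposes of your worry about whether the quartic has a suitable real root with $\Lambda\neq1$ and finite $\beta,\gamma,\delta$.

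The genuine gap is in the reverse direction, at the step ``the orthogonality relation forces $w_1^Tb=0$'' --- and the paper commits the same error (its proof of Theorem~\ref{thm.whisker} even writes $[w_1^T,w_2^T][b_1^T,b_2^T]^T$ as a stacked vector rather than the sum $w_1^Tb_1+w_2^Tb_2$). Once you know $w_2=c_2w_1$, $w_3=c_3w_1$, $w_4=c_4w_1$, orthogonality to a column of $[b^T,b_i^T,b_i^T,b_i^T]^T$ reads $(1+\epsilon_2c_2+\epsilon_3c_3+\epsilon_4c_4)\,w_1^Tb=0$ with $\epsilon_j\in\{0,1\}$, and you may conclude $w_1^Tb=0$ only if the scalar prefactor is nonzero. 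It is nonzero when all $b_i=\mathbf{0}$ (prefactor $1$), but for $b_i=b$ it vanishes at $\lambda=2$, where $c_2=-1$, $c_3=1$, $c_4=-1$. Indeed $[\mathbf{1}^T,-\mathbf{1}^T,\mathbf{1}^T,-\mathbf{1}^T]^T$ is always an eigenvector of $L'$ with eigenvalue $2$ and is orthogonal to every column of $[b^T,b^T,b^T,b^T]^T$, so $(L',[b^T,b^T,b^T,b^T]^T)$ is uncontrollable for \emph{every} $b$: the ``only if'' direction is false in the all-$b$ case (the analogous vector $[\mathbf{1}^T,-\mathbf{1}^T]^T$ breaks Theorem~\ref{thm.whisker} for the input $[b^T,b^T]^T$ as well). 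Your argument, like the paper's, is sound for the input $[b^T,\mathbf{0}^T,\mathbf{0}^T,\mathbf{0}^T]^T$; to cover other choices of $b_i$ you must verify that the prefactor $1+\sum_j\epsilon_jc_j$ cannot vanish, which in general it can.
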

\begin{proof}
We again prove the contrapositive using the PBH test.
Suppose that $(L,b)$ is uncontrollable.
Then, there exists $w$ such that $w^Tb=0$ and $w^TL=\lambda L$. 
We show by construction that there exists a left eigenvector of $L'$ that is orthogonal to the columns of $[b^T,b^T]^T$ and of $[b^T,\mathbf{0}^T]$.
It can be verified in a similar computation as in the proof of Theorem~\ref{thm.whisker} that $[w^T,\alpha^T,\beta^T,\gamma^T]$ is a left eigenvector of $L'$ with eigenvalue $\Lambda$, where 
 \startcompact{small}
\begin{align}
 \gamma = \dfrac{1}{1-\Lambda}\beta, ~ \beta = \left(\dfrac{1}{1-\Lambda}-\Lambda-2\right)^{-1}w, ~\alpha = \dfrac{1}{1-\Lambda}w,
\end{align}
 \stopcompact{small}
and where $\Lambda$ satisfies the equation $  \lambda +2=\frac{\Lambda  \left(\Lambda ^3-4 \Lambda +2\right)}{\Lambda ^3-2
   \Lambda +1}$.
Note that $\alpha$ and $\beta$ are simply scalings of $w$, and therefore $\gamma$ is also a scaling of $w$.
Therefore, $\alpha,\beta$ and $\gamma$ are all orthogonal to the columns of $b$: $\alpha^Tb = \beta^Tb = \gamma^Tb = w^Tb = \mathbf{0}$, and so it is clear that $ [w^T,\alpha^T,\beta^T,\gamma^T] [b^T,b_i^T,b_i^T,b_i^T]^T = \mathbf{0}$ for $b_i\in\{b,\mathbf{0}\}$.
It follows that  $(L',[b^T,b_i^T,b_i^T,b_i^T]^T)$ is uncontrollable.

For the reverse direction, assume that the pair $(L',b=[b_1^T,b_2^T,b_3^T,b_4^T]^T)$ is uncontrollable.
Then by the PBH test, there exists an eigenvector $w=[w_1^T,w_2^T,w_3^T,w_4^T]^T$ of $L'$ orthogonal to the columns of $b$ with eigenvalue $\lambda$ such that 
  \begin{align}
&\left[
  \begin{array}{cccc}
    L+2I&-I&-I&\mathbf{0}\\
    -I&I&\mathbf{0}&\mathbf{0}\\
    -I&\mathbf{0}&2I&-I\\
    \mathbf{0}&\mathbf{0}&-I&I
  \end{array}\right]\left[
                              \begin{array}{c}
                                w_1\\w_2\\w_3\\w_4
                              \end{array}\right] =\lambda\left[
                              \begin{array}{c}
                                w_1\\w_2\\w_3\\w_4
                              \end{array}\right].
\end{align}
It follows from a simple computation that  $w_1$ is an eigenvector of $L$ with eigenvalue 
\begin{align}
  \Lambda = \frac{\lambda ^4-6 \lambda ^3+14 \lambda ^2-14 \lambda +4}{(\lambda -1)
   \left(\lambda ^2-3 \lambda +1\right)}.
\end{align}
Since $w$ is orthogonal to the columns of $b$, it follows that $w_1^Tb_1=\mathbf{0}$, and the theorem follows.
\end{proof}
As Theorem~\ref{thm.path} suggests, there are many ways to grow graphs such that they remain controllable.
In this case, we have shown that adding a \emph{cluster} of nodes, namely a leaf and a path of length 2, to each node preserves controllability.
It is natural to examine what types of node clusters in general each node can be replaced with to preserve controllability.
The following theorem places some conditions on these clusters.
\begin{theorem}[General Graph Growth]\label{thm.concat}
Let $L$ be an $n\times n$ graph Laplacian, and let $L_\delta$ be $n\times n$, $C$ be $r\times r$ and $B$ be $n\times r$ (where $r=kn$ for $k\in\mathbb{Z}_+$) such that the matrix 
\begin{align}
  L'=\left[
    \begin{array}{cc}
      L+L_\delta&B\\B^T&C
    \end{array} \right]
\end{align}
is a graph Laplacian.
Since we are interested in adding the same cluster to each node, we can write $L_\delta = sI$, where $s$ is the number of edges added to the node when attaching it to the cluster.
We have the following results.

1) Let $b_n,w_1\in\mathbb{R}^n$ and $b_r,\beta\in\mathbb{R}^r$.
Suppose $(L',b=[b_1,b_r])$ is uncontrollable.  
Then, there exists an eigenvector $W_1\neq0$ such that $L'W_1=\Lambda w$, $W_1^Tb=\mathbf{0}$ where, say $W_1=[w_1^T,\beta^T]^T$. 
Then $(L,b_1)$ is uncontrollable when  $(\Lambda I -C)$ is invertible and when $w_1$ is an eigenvector of $B(\Lambda I -C)^{-1}B^T$.

2) Suppose $(L,b)$ is uncontrollable. Then, there exists $w\neq 0$ such that $Lw=\lambda w$ with $w^Tb=0$. We thus have that $(L',[b,\mathbf{0}])$ is uncontrollable if there exists $\Lambda\geq 0$ such that $(\Lambda I- C)^{-1}$ is invertible, and $w$ is an eigenvector of $B(\Lambda I -C)^{-1}B^T$ such that $B(\Lambda I -C)^{-1}B^Tw=f(\Lambda)$, where $\Lambda$ satisfies $\Lambda-\lambda-s=f(\Lambda)$.

\end{theorem}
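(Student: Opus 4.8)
The plan is to run the Popov--Belevitch--Hautus (PBH) test~\cite{Mesbahi2010} in both directions, exploiting the $2\times2$ block structure of $L'$ to turn an eigenvalue condition on $L'$ into one on $L$ by eliminating the cluster block. Write a candidate eigenvector of $L'$ as $W=[w_1^T,\beta^T]^T$ with $w_1\in\mathbb{R}^n$ and $\beta\in\mathbb{R}^r$; since $L_\delta=sI$, the relation $L'W=\Lambda W$ is equivalent to the pair
\begin{align}
(L+sI)w_1+B\beta=\Lambda w_1,\qquad B^Tw_1+C\beta=\Lambda\beta.
\end{align}
Solving the second equation for $\beta$ whenever $\Lambda I-C$ is invertible gives $\beta=(\Lambda I-C)^{-1}B^Tw_1$, and substituting into the first yields the reduced identity
\begin{align}
Lw_1=(\Lambda-s)w_1-B(\Lambda I-C)^{-1}B^Tw_1,
\end{align}
which is the backbone of both parts.

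For part~1, assume $(L',b)$ with $b=[b_1^T,b_r^T]^T$ is uncontrollable, so by PBH there is a nonzero $W_1=[w_1^T,\beta^T]^T$ and a $\Lambda$ with $L'W_1=\Lambda W_1$ and $W_1^Tb=0$. If $\Lambda I-C$ is invertible and $w_1$ is an eigenvector of $B(\Lambda I-C)^{-1}B^T$, say with eigenvalue $\mu$ (so $w_1\neq0$ automatically), the reduced identity collapses to $Lw_1=(\Lambda-s-\mu)w_1$, i.e.\ $w_1$ is an eigenvector of $L$. The orthogonality condition splits as $0=W_1^Tb=w_1^Tb_1+\beta^Tb_r$, so $w_1^Tb_1=0$ once the cluster contribution $\beta^Tb_r$ vanishes (as it does for $b_r=\mathbf{0}$), and PBH then shows $(L,b_1)$ is uncontrollable. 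The point requiring care is excluding $w_1=0$: in that degenerate case $\beta$ is merely an eigenvector of $C$ lying in $\ker B$, carrying no information about $L$, which is precisely the situation ruled out by requiring $w_1$ to be an eigenvector of $B(\Lambda I-C)^{-1}B^T$.

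For part~2, argue in reverse. Uncontrollability of $(L,b)$ gives $w\neq0$ with $Lw=\lambda w$ and $w^Tb=0$; I want an eigenvector $W=[w^T,\beta^T]^T$ of $L'$ orthogonal to $[b^T,\mathbf{0}^T]^T$, and the orthogonality is free since $W^T[b^T,\mathbf{0}^T]^T=w^Tb=0$ for every $\beta$. Pick $\Lambda\geq0$ with $\Lambda I-C$ invertible and set $\beta=(\Lambda I-C)^{-1}B^Tw$; the second block equation holds by construction, and the first becomes, using $Lw=\lambda w$,
\begin{align}
B(\Lambda I-C)^{-1}B^Tw=(\Lambda-\lambda-s)w.
\end{align}
Hence it suffices that $w$ be an eigenvector of $B(\Lambda I-C)^{-1}B^T$ with eigenvalue $f(\Lambda)$, together with the scalar compatibility relation $\Lambda-\lambda-s=f(\Lambda)$. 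Given such a $\Lambda$, the vector $W$ is nonzero, satisfies $L'W=\Lambda W$, and is orthogonal to $[b^T,\mathbf{0}^T]^T$, so $(L',[b^T,\mathbf{0}^T]^T)$ is uncontrollable by PBH.

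I expect the real obstacle to be justifying the hypotheses invoked at the end of part~2: why the fixed eigenvector $w$ of $L$ should also be an eigenvector of $B(\Lambda I-C)^{-1}B^T$, and why $\Lambda-\lambda-s=f(\Lambda)$ should admit a nonnegative root avoiding the spectrum of $C$. This is where ``attaching the same cluster to every node'' enters: ordering the $r=kn$ cluster vertices so that $B$ and $C$ acquire a Kronecker / block-repeated form makes $B(\Lambda I-C)^{-1}B^T$ a scalar rational function of $\Lambda$ times $I_n$, so \emph{every} $w$ is an eigenvector and $f$ is explicit --- recovering, in the running examples, $\Lambda=\tfrac12(\sqrt{\lambda^2+4}+\lambda+2)$ for $\mathscr{W}_1$ and the corresponding rational relation for $\mathscr{W}_2$. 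Invertibility of $\Lambda I-C$ is then only a non-degeneracy condition: the roots of the compatibility equation are isolated, so one just has to argue they do not coincide with the finitely many eigenvalues of $C$.
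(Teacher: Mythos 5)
Your proposal is correct and follows essentially the same route as the paper: eliminate $\beta$ via the second block row using $(\Lambda I - C)^{-1}$, reduce to $Lw_1 = \left[(\Lambda - s)I - B(\Lambda I - C)^{-1}B^T\right]w_1$, and apply the PBH test in each direction. Your added remarks — that $w_1^Tb_1 = 0$ requires the cluster term $\beta^Tb_r$ to vanish, and that the block-repeated structure of $B$ and $C$ is what makes every $w$ an eigenvector of $B(\Lambda I - C)^{-1}B^T$ — are points the paper glosses over, and they strengthen rather than alter the argument.
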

\begin{proof}
  We prove the two results separately.

1) Assuming the notation in Theorem~\ref{thm.concat}-(1), suppose that $(L',b=[b_1,b_r])$ is uncontrollable.
Then, by the PBH test, there exists an eigenvector $W_1=[w_1^T,\beta^T]^T\neq0$ such that $L'W_1=\Lambda w$, $W_1^Tb=\mathbf{0}$ yielding:
\begin{align}
  L'W_1&=\left[
  \begin{array}{cc}
    (L+sI)w_1 + B\beta\\B^Tw_1 + C\beta
  \end{array}\right]=\Lambda \left[
                         \begin{array}{c}
                           w_1\\\beta
                         \end{array}\right].\label{eq.bet}
\end{align}
Therefore, if $(\Lambda I-C)$ is invertible, the lower entry of the vector in Equation~\eqref{eq.bet} gives $\beta = (\Lambda I -C)^{-1}B^T w_1$ and the first entry of the vector in Equation~\eqref{eq.bet} gives 
\begin{align}
  Lw_1 = \left[(\Lambda-s)I - B(\Lambda I - C)^{-1}B^T\right]w_1.
\end{align}
This equation admits $w_1$ as an eigenvector of $L$ if the action of $ B(\Lambda I - C)^{-1}B^T$ on $w_1$ is to scale $w_1$ by a fixed amount.
In other words, if $w_1$ is an eigenvector of $B(\Lambda I - C)^{-1}B^T$, then it is an eigenvector of $L$, and since $w_1^T b_1=\mathbf{0}$, the result follows.

2) Assuming the notation in Theorem~\ref{thm.concat}-(2), suppose that $(L,b)$ is uncontrollable. 
Then, there exists $w\neq 0$ such that $Lw=\lambda w$ with $w^Tb=0$.
We seek an admissible solution for the equation 
\begin{align}
  \left[
    \begin{array}{cc}
      L+L_\delta&B\\B^T&C
    \end{array} \right]\left[
                         \begin{array}{c}
                           w\\\beta
                         \end{array}\right]
&=\Lambda \left[
                         \begin{array}{c}
                           w\\\beta
                         \end{array}\right]\label{eq.vec1}
\end{align}
in terms of the eigenvalue $\Lambda$ of $L'$, and the lower part of the eigenvector, $\beta$.
If $(\Lambda I-C)$ is invertible, we can write $\beta = (\Lambda I-C)^{-1}B^Tw$.
From the upper entry of the vector in Equation~\eqref{eq.vec1}, we get the relation
\begin{align}
  (\Lambda-\lambda-s)Iw= B(\Lambda I-C)^{-1}B^Tw.
\end{align}
Then, if $w$ is an eigenvector of $B(\Lambda I-C)^{-1}B^T$, say $B(\Lambda I-C)^{-1}B^Tw = f(\Lambda)w$, then we get an equation for $\Lambda$: 
\begin{align}
  \Lambda - \lambda - s = f(\Lambda).
\end{align}
We add the stipulation that $\Lambda$ must be \emph{admissible}: $\Lambda\geq0$ for it to be a Laplacian eigenvalue.
Finally, it is clear that since $w^Tb=0$, we have that $[w^T,\beta^T][b^T,\mathbf{0}^T]^T=\mathbf{0}$.
\end{proof}

In the next section, we discuss using a similar graph growing approach to optimize graph performance.
We will also discuss bounds obtained using the submodularity theorems in \S\ref{sec.prelim} on the performance of graphs generated using the whiskering method.

\subsection{Optimization Algorithms: Adding Leaves}
In this section, we discuss optimization problems that are related to growing graphs.
In particular, we consider efficient addition of node clusters to a specific set of nodes in the graph.

Consider a connected graph $\graph$ and its Laplacian matrix $L_\graph$.
The second-smallest eigenvalue $\lambda_2(L_\graph)$ is a measure of how interconnected the graph is.
It is also an inverse measure of how long it takes for agents connected with graph $\mathcal{G}$ to achieve consensus by convergence to the agreement subspace.
A well-known algorithm by Ghosh and Boyd~\cite{Ghosh2006} adds edges between unconnected nodes in $\graph$ to maximize $\lambda_2(L_\graph)$.

The algorithm considers a set of candidate edges between unconnected nodes in $\graph$, and selects the $k$ candidate edges that maximize $\lambda_2(L_\graph)$.
For a set of $m$ candidate edges $l=\{i,j\}$, let $a_l$ be the vector with all-zero entries except $(a_{l})_j=1$ and $(a_{l})_j=-1$ when $\{i,j\}$ is a candidate edge.
The selection of $k$ candidate edges from this set can be encoded with a $\{0,1\}^m$-vector $x$, where $x_l=1$ if the algorithm selects candidate edge $l$, and zero otherwise.
The optimization problem is then written in terms of the individual Laplacians $a_la_l^T$ for each edge $l$ as follows:
{\small
\begin{equation*}
\begin{aligned}
& {\text{maximize}}
& & \lambda_2\left(L_\graph + \sum_{l=1}^m x_la_la_l^T \right)\\
& \text{subject to}
& & \mathbf{1}^Tx = k \\
& 
& & x\in\{0,1\}^m.
\end{aligned}
\end{equation*}
}%
The standard relaxation of this problem into a semidefinite program (SDP) is of the form
{\small
\begin{equation*}
\begin{aligned}
& {\text{maximize}}
& & s\\
& \text{subject to}
& & s(I-\mathbf{1}\mathbf{1}^T/n) \preceq L(x) \\
& 
& & \mathbf{1}^Tx = k \\
& 
& & 0\leq x\leq 1 \\
&
& & L(x) = L_\graph + \sum_{l=1}^m x_la_la_l^T.
\end{aligned}
\end{equation*}
}%

We present a modification of this algorithm whereby one wants to add \emph{nodes} to the graph $\graph$ in such a way that the graph grows in order to maximize $\lambda_2(\graph)$.
Suppose $\graph$ has $n$ nodes.
We want to choose one of these $n$ nodes to attach leaves to in order to `grow' the graph to maximize $\lambda_2(\graph)$. 
Recall that  $L_\graph[I]$ is the principal submatrix of $L_\graph$ obtained by deleting the rows and columns of $L_\graph$ corresponding to the elements in the set $I\setminus[m]$.
Let $L_{\text{tot}}$ denote the graph that has every node whiskered, as in Equation~\eqref{eq.whisker}.
We can write this problem as
{\small
\begin{equation}
\begin{aligned}
& {\text{maximize}}
& & \lambda_2(L_{\graph'}) \\
& \text{subject to}
& & L_{\graph'} \in \left\{ L_{\text{tot}}\big[[n]\cup\{i\}\big],~i\in\{n+1,\dots,2n\} \right\} \\
&
& & L_{\text{tot}}=\left[\begin{array}{cc}
      L_\graph + I&-I\\-I&I
    \end{array} \right].\label{prob.search}
\end{aligned}
\end{equation}
}%

This can be solved via exhaustive search over all possible whiskerings; however this becomes computationally intractable for large $n$.
We can relax this problem to a modified Ghosh-Boyd Max-$\lambda_2(\graph)$ SDP as follows.
Let $e_i$ denote the $i$th standard basis vector in $\mathbb{R}^n$.
Then, we introduce a single node into the system and create a set of $n$ candidate edges potentially connecting the new node to any pre-existing node in the graph.
The individual Laplacian for each candidate edge is $a_ia_i^T$, where $a_i\in\mathbb{R}^{n+1}$ is of the form $a_i = [e_i,-1]^T$.
The SDP relaxation is then
{\small
\begin{equation}
\begin{aligned}
& {\text{maximize}}
& & s \\
& \text{subject to}
& & s\left(I_{n+1}-\dfrac{(\mathbf{1}\mathbf{1}^T)_{n+1}}{n+1}\right) \preceq L(x) \\
&
& & \mathbf{1}^Tx = 1 \\
& 
& & 0\leq x\leq 1 \\
&
& & L(x) = L_\graph' + \sum_{l=1}^n x_la_la_l^T\\
&
& & L_\graph' = \left[ 
  \begin{array}{cc}
    L_\graph& \mathbf{0}_n \\ \mathbf{0}_n^T & 0
  \end{array}\right],~\mathbf{0}_n\in\mathbb{R}^n.
\end{aligned}\label{prob.1}
\end{equation}
}%

Note that $L[K]$ is positive-definite. 
We can also relate the inverse of this matrix to the \emph{controllability Gramian}~\cite{HudobaDeBadyn2015a,Chapman2015} $P$, which is a measure of the steady-state covariance of the agent states.
The matrix $P$ is the positive-definite solution of the Lyapunov equation 
\begin{align}
  -PL[2:n] - L[2:n]P^T = -I,
\end{align}
and is given by $P = \frac{1}{2} L[2:n]^{-1}$. 
Certain submodular functions of $P$ (with respect to edge-addition) have been studied in~\cite{Summers2014}.

The trace of $P$ can be interpreted as an average amount of energy expended to move the agent states around the controllable subspace, and therefore it is of interest to be able to bound the value of $\Tr P$ on the results of our algorithms. We do this using the supermodularity properties of $M$-matrices from Theorem~\ref{thm.supermod}.
 
\begin{theorem}\label{thm.bnd1}
  Let $L_1'$ denote the whiskering process in Equation~\eqref{eq.whisker} and  let $L_2'$ denote the whiskering process in Equation~\eqref{eq.whiskerpath}, where $L_1$ and $L_2$ are $n\times n$, and so $L_1'$ is $2n\times2n$ and $L_2'$ is $4n\times4n$.
  Let the controllability Gramians $P_1'$ of $L_1'$ and $P_2'$ of $L_2'$ be the respective solutions to 
  \begin{align}
     -P_1'L_1'[2:2n] - L_1'[2:2n]P_1'^T &= -I\\
     -P_2'L_2'[2:4n] - L_2'[2:4n]P_2^T &= -I.
  \end{align}
Then, 
\begin{align}
  \Tr P_1' \geq n + C_1~\text{and}~\Tr P_2' \geq 4n + C_2,
\end{align}
where $C_1,C_2$ are constants depending on $L_1,L_2$ respectively.
\end{theorem}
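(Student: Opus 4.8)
The plan is to reduce both inequalities to lower bounds on $\Tr(L_1'[2:2n])^{-1}$ and $\Tr(L_2'[2:4n])^{-1}$ via the identity $P = \tfrac12 L[2:n]^{-1}$ recorded above, and then to extract those bounds from the supermodularity of $K \mapsto \Tr A[K]^{-1}$ for $M$-matrices (Theorem~\ref{thm.supermod}). The first step is to check that $A_1 := L_1'[2:2n]$ and $A_2 := L_2'[2:4n]$ are symmetric positive-definite $M$-matrices: each is a grounded Laplacian of the whiskered graph, and as long as the original graph --- hence its whiskering --- is connected, deleting one row and column leaves a nonsingular, diagonally dominant matrix with nonpositive off-diagonal entries, i.e.\ an $M$-matrix. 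Theorem~\ref{thm.supermod} then applies, so $K \mapsto \Tr A_i[K]^{-1}$ is supermodular over subsets $K$ of the surviving index set.

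For $L_1'$ I would partition the surviving index set $\{2,\dots,2n\}$ of $A_1$ into the surviving original vertices $S = \{2,\dots,n\}$ and the whisker vertices $W = \{n+1,\dots,2n\}$; these are disjoint with union the whole set, so supermodularity gives $\Tr A_1^{-1} = \Tr A_1[S\cup W]^{-1} \geq \Tr A_1[S]^{-1} + \Tr A_1[W]^{-1} - \Tr A_1[\varnothing]^{-1}$, and $\Tr A_1[\varnothing]^{-1} = 0$ by convention. Reading off the block form in~\eqref{eq.whisker}, $A_1[W]$ is the lower-right $n \times n$ block, namely $I_n$, so $\Tr A_1[W]^{-1} = n$; and $A_1[S] = L_1[2:n] + I_{n-1}$ is a fixed positive-definite matrix built from $L_1$, the trace of whose inverse is a positive constant depending only on $L_1$. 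Using $P_1' = \tfrac12 A_1^{-1}$ and collecting terms yields the stated shape $\Tr P_1' \geq n + C_1$ with $C_1 = C_1(L_1)$.

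The $L_2'$ case runs the same way, except that the surviving index set of $A_2$ decomposes into four pieces --- surviving originals $S$, leaves $W_\ell$, path-midpoints $W_m$, path-endpoints $W_e$ --- so I would iterate the supermodular inequality to get $\Tr A_2^{-1} \geq \Tr A_2[S]^{-1} + \Tr A_2[W_\ell]^{-1} + \Tr A_2[W_m]^{-1} + \Tr A_2[W_e]^{-1}$. By~\eqref{eq.whiskerpath} the three appended blocks are $I_n$, $2I_n$ and $I_n$, which together contribute a term linear in $n$, while $A_2[S] = L_2[2:n] + 2I_{n-1}$ contributes the constant $C_2 = C_2(L_2)$; after the $\tfrac12$ and collecting the linear coefficient this gives $\Tr P_2' \geq 4n + C_2$.

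The delicate point is not the supermodular inequality itself but its hypotheses and orientation. One must (i) confirm the $M$-matrix property, which is really a connectivity/invertibility requirement on the graphs --- without it $L_i'[2:\cdot]$ is singular and the Gramian is not even defined; (ii) apply supermodularity in the correct direction (supermodular means $g(S) + g(W) \leq g(S\cup W) + g(S\cap W)$, which is exactly the inequality lower-bounding the full-set value by the block contributions, the intersection term vanishing since $\Tr A[\varnothing]^{-1} = 0$); and (iii) match the block partition of~\eqref{eq.whisker}/\eqref{eq.whiskerpath} so that the appended diagonal blocks really are the scaled identities responsible for the linear-in-$n$ term. Everything past that is routine trace arithmetic, which I would not belabor.
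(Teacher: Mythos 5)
Your argument for $L_1'$ is exactly the paper's: split the surviving indices into $\{2,\dots,n\}$ and the whisker block, apply the $p=-1$ case of Theorem~\ref{thm.supermod} with the empty intersection, and read off $\Tr(I_n^{-1})=n$ plus a constant from $L_1[2:n]+I$. Your care about the $M$-matrix hypothesis and the orientation of the supermodular inequality is correct and, if anything, more explicit than the paper.

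However, your $L_2'$ case has a genuine gap: the four-way partition is too fine to produce the claimed constant. If you separate the path-midpoints from the path-endpoints, the block contributions are $\Tr(I_n^{-1})+\Tr\bigl((2I_n)^{-1}\bigr)+\Tr(I_n^{-1}) = n + \tfrac{n}{2} + n = \tfrac{5n}{2}$, not $4n$; iterating supermodularity over a finer disjoint partition only weakens the lower bound, because it discards the off-diagonal coupling $-I$ between the two path blocks. The paper instead keeps all $3n$ appended indices together as a single block, whose submatrix is block-diagonal with a leaf block $I$ and a path block $\left[\begin{smallmatrix}2I&-I\\-I&I\end{smallmatrix}\right]$; the latter has inverse $\left[\begin{smallmatrix}I&I\\I&2I\end{smallmatrix}\right]$ with trace $3n$, giving $n+3n=4n$ in total. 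So your conclusion $\Tr P_2'\geq 4n + C_2$ does not follow from the decomposition you wrote, though it is repaired simply by not splitting the path cluster. (Separately, both you and the paper are loose about the factor $\tfrac12$ in $P=\tfrac12 L[2:\cdot]^{-1}$ when passing from $\Tr L'[2:\cdot]^{-1}$ to $\Tr P'$; since the paper's own proof drops it, I do not count that against you, but it is worth noting.)
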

\begin{proof} 
We observe that the solution to 
  \begin{align}
     -P'L'[2:2n] - L'[2:2n]P'^T = -I
  \end{align}
is given by $P' = \frac{1}{2} L'[2:2n]^{-1}$.
From Theorem~\ref{thm.supermod}, using the fact that $L'$ is an $M$-matrix, we have that 
\begin{align}
  \Tr(P_1) &= \Tr(L_1'[2:2n]^{-1})\\
 &\geq \Tr(L_1'[2:n]^{-1})+\Tr(L_1'[n+1:2n]^{-1})\\
&=  \Tr([L_1[2:n]+I]^{-1}) + \Tr(I) = C_1+n,
\end{align}
where $C_1=\Tr([L_1[2:n]+I]^{-1})$ depends only on $L_1$.
The second result for $L_2$ follows from an identical calculation, noting that 
\begin{align}
\Tr\left[   \begin{array}{ccc}
   I&\mathbf{0}&\mathbf{0}\\
   \mathbf{0}&2I&-I\\
   \mathbf{0}&-I&I
  \end{array}\right]^{-1} = 2\Tr (I) + \Tr(2I) = 4n.
\end{align}
\end{proof}
We can use this result to bound the trace of the controllability Gramian when adding a single node to the system. 
\begin{theorem}\label{thm.p}
  Consider the task of attaching a single node to the system with $n\times n$ Laplacian $L$ to maximize $\lambda_2$, as denoted in Problem~\eqref{prob.search}.
Let $L'$ be the subsequent Laplacian, and so $P=\frac{1}{2}L'[2:n+1]^{-1}$.
Then, $\Tr P \geq C + 1$, where $C$ is a constant depending only on $L$.
\end{theorem}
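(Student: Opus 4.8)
The plan is to reduce the claim to the supermodularity estimate already used in the proof of Theorem~\ref{thm.bnd1}. By the construction in Problem~\eqref{prob.search}, the ``subsequent Laplacian'' is $L' = L_{\text{tot}}\big[[n]\cup\{i\}\big]$ for some $i\in\{n+1,\dots,2n\}$, where $L_{\text{tot}} = \mathscr{W}_1(L)$ is the fully whiskered Laplacian of Equation~\eqref{eq.whisker}. Deleting the first row and column of $L'$ leaves the rows and columns indexed by the original vertices $2,\dots,n$ together with the single retained vertex $i$, so $L'[2:n+1] = L_{\text{tot}}[S_i]$ with $S_i := \{2,\dots,n\}\cup\{i\}$; since $P = \frac{1}{2} L'[2:n+1]^{-1}$, it suffices to lower bound $\Tr L_{\text{tot}}[S_i]^{-1}$, uniformly over the admissible indices $i$. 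Because whiskering preserves connectivity, $L_{\text{tot}}$ is the Laplacian of a connected graph, hence an $M$-matrix, so Theorem~\ref{thm.supermod} applies and the set function $S\mapsto \Tr L_{\text{tot}}[S]^{-1}$ is supermodular.

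First I would apply supermodularity to the two disjoint sets $J = \{2,\dots,n\}$ and $K = \{i\}$, for which $J\cup K = S_i$ and $J\cap K = \emptyset$. Using the convention $\Tr L_{\text{tot}}[\emptyset]^{-1} = 0$ that is already implicit in the proof of Theorem~\ref{thm.bnd1}, the supermodular inequality rearranges to $\Tr L'[2:n+1]^{-1} = \Tr L_{\text{tot}}[S_i]^{-1} \ge \Tr L_{\text{tot}}[\{2,\dots,n\}]^{-1} + \Tr L_{\text{tot}}[\{i\}]^{-1}$.

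Next I would evaluate the two terms on the right. Since $i$ indexes one of the added whisker vertices, $L_{\text{tot}}[\{i\}]$ is the $1\times 1$ block equal to that vertex's degree, so $\Tr L_{\text{tot}}[\{i\}]^{-1} = 1$. Since $\{2,\dots,n\}\subset[n]$ lies entirely in the original-vertex block of $L_{\text{tot}}$, where the matrix coincides with $L+I$, we get $L_{\text{tot}}[\{2,\dots,n\}] = L[2:n]+I$, which is \emph{independent of $i$}; and as $L[2:n]$ is a principal submatrix of a Laplacian we have $L[2:n]\succeq 0$, so $L[2:n]+I\succ 0$ and $C := \Tr(L[2:n]+I)^{-1}$ is a finite constant depending only on $L$. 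Combining with the previous inequality yields $\Tr L'[2:n+1]^{-1} \ge C+1$, which is the asserted bound $\Tr P \ge C+1$ upon identifying $\Tr P$ with $\Tr L'[2:n+1]^{-1}$ exactly as in the proof of Theorem~\ref{thm.bnd1}.

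I do not anticipate a substantial obstacle: the argument is a specialization of the one for Theorem~\ref{thm.bnd1} to the case of keeping a single whisker vertex. The one point deserving care is that the bound must hold for \emph{whichever} vertex $i$ the optimization in Problem~\eqref{prob.search} selects, and routing the estimate through $L_{\text{tot}}$ makes this automatic, since the only $i$-dependent quantity in the bound collapses to $\Tr L_{\text{tot}}[\{i\}]^{-1} = 1$ while the bulk term $\Tr(L[2:n]+I)^{-1}$ never involves $i$. A minor bookkeeping check is that $L_{\text{tot}}[S_i]$ and $L_{\text{tot}}[\{2,\dots,n\}]$ are invertible — the former is a connected-graph Laplacian with a nonempty set of rows and columns deleted, the latter is $L[2:n]+I\succ 0$ — so that Theorem~\ref{thm.supermod} is invoked only where $\Tr(\cdot)^{-1}$ is defined.
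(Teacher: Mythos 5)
Your overall strategy --- supermodularity of $K \mapsto \Tr A[K]^{-1}$ for an $M$-matrix, applied to the disjoint split (original nodes) $\cup$ (new node) with the empty-set convention --- is exactly the paper's. The one concrete problem is the identification $L'[2:n+1] = L_{\text{tot}}\big[\{2,\dots,n\}\cup\{i\}\big]$. The theorem's $L'$ is the honest Laplacian of the grown graph (this is what makes $P = \tfrac{1}{2}L'[2:n+1]^{-1}$ the controllability Gramian, and it is what the paper's proof uses: it evaluates $L'[2:n] = L[2:n]+e_ie_i^T$). In that Laplacian only the chosen attachment node gains a unit of degree, so the original-node block is $L + e_je_j^T$. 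In $L_{\text{tot}}\big[[n]\cup\{i\}\big]$, by contrast, every original node retains the $+1$ on the diagonal contributed by its now-deleted whisker, so the original-node block is $L+I$ and the row sums are no longer zero --- that principal submatrix is not the Laplacian of the grown graph. Consequently $L_{\text{tot}}[S_i] = L'[2:n+1] + D$ with $D \succeq 0$ diagonal and nonzero, and your chain of inequalities bounds $\Tr L_{\text{tot}}[S_i]^{-1}$, not the quantity $\Tr L'[2:n+1]^{-1}$ that the theorem is about.

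The conclusion is salvageable because you erred on the safe side: $L_{\text{tot}}[S_i] \succeq L'[2:n+1] \succ 0$ gives $L_{\text{tot}}[S_i]^{-1} \preceq L'[2:n+1]^{-1}$, hence $\Tr L'[2:n+1]^{-1} \geq \Tr L_{\text{tot}}[S_i]^{-1} \geq \Tr\big((L[2:n]+I)^{-1}\big) + 1$; but that operator-monotonicity step is not in your write-up and must be added, and it produces a weaker constant than necessary. The paper's cleaner route is to apply Theorem~\ref{thm.supermod} directly to $L'$ with $J = \{2,\dots,n\}$ and $K=\{n+1\}$, which yields $\Tr L'[2:n+1]^{-1} \ge \Tr\big([L[2:n]+e_ie_i^T]^{-1}\big) + 1$ and the $i$-independent constant $C = \min_i \Tr\big([L[2:n]+e_ie_i^T]^{-1}\big)$. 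The remaining steps of your argument --- the evaluation $\Tr L_{\text{tot}}[\{i\}]^{-1}=1$, the disjoint-set supermodular inequality, and the invertibility checks --- match the paper's proof.
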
 
\begin{proof}
  Using Theorem~\ref{thm.supermod} we can compute 
  \begin{align}
    \Tr(P) &= \Tr(L'[2:n+1]^{-1}) \\
&\geq \Tr(L'[2:n]^{-1}) + \Tr(L'[n+1:n+1]^{-1})\\
&= \Tr([L[2:n]+e_ie_i^T]^{-1}) + 1 \geq C+1,
  \end{align}
where $i$ is the index of the attachment node chosen, and $C=\min_i(\Tr([L[2:n]+e_ie_i^T]^{-1}))$ is a constant depending only on $L$.
\end{proof}

In the next section, we will consider adding a cluster, and provide a similar result on the performance $\Tr( P)$.
\subsection{Optimization Algorithms: Adding Clusters}
In the previous section, we considered the problem of optimally adding leaves to some nodes to optimize the algebraic connectivity of the graph.
We will now consider the problem of adding a cluster of a node and a length-2 path, as depicted in Figure~\ref{fig.pathwhisk}.                   

Let $\mathbf{0}_n\in\mathbb{R}^n$ and define  $a_{3\to4}=[\mathbf{0}_n^T,0,1,-1]^T$, $a_{i,1}=[e_i^T,-1,0,0]^T$ and $a_{i,2}=[e_i^T,0,-1,0]^T$.
Then, choosing an attachment node to maximize $\lambda_2$ can be written as
{\small
\begin{equation}
\begin{aligned}
& {\text{maximize}}
& & \lambda_2(L_{\graph'}) \\
& \text{subject to}
& & L_{\graph'} \in \big\{ L_{\text{tot}}\big[[n]\cup\{i,i+n,i+2n\}\big],\\
&
& &~i\in\{n+1,\dots,2n\} \big\} \\
&
& & L_{\text{tot}}=\left[
  \begin{array}{cccc}
    L+2I&-I&-I&\mathbf{0}\\
    -I&I&\mathbf{0}&\mathbf{0}\\
    -I&\mathbf{0}&2I&-I\\
    \mathbf{0}&\mathbf{0}&-I&I
  \end{array}\right].\label{prob.search2}
\end{aligned}
\end{equation}
}%
We can write the SDP relaxation as:
{\small
\begin{equation}
\begin{aligned}
& {\text{maximize}}
& & s \\
& \text{subject to}
& & s\left(I_{n+3}-\dfrac{(\mathbf{1}\mathbf{1}^T)_{n+3}}{n+3}\right) \preceq L(x) \\
&
& & \mathbf{1}^Tx = 1 \\
& 
& & 0\leq x\leq 1 \\
&
& & L(x) = L_\graph' + a_{3\to4}a_{3\to4}^T  \\
&
& & ~~~~~~+\sum_{l=1}^n x_l(a_{i,1}a_{i,1}^T +a_{i,2}a_{i,2}^T )\\
&
& & L_\graph' = \left[ 
  \begin{array}{cc}
    L_\graph& \mathbf{0}_{n\times 3} \\ \mathbf{0}_{3\times n} & \mathbf{0}_{3\times 3}
  \end{array}\right].
\end{aligned}\label{prob.2}
\end{equation}
}%

Lastly, we provide a performance bound on the Gramian analogous to Theorem~\ref{thm.p}. 
\begin{theorem}
   Consider the task of attaching a single node to the system with $n\times n$ Laplacian $L$ to maximize $\lambda_2$, as denoted in Problem~\eqref{prob.search2}. 
Let $L'$ be the subsequent Laplacian, and so $P=\frac{1}{2}L'[2:n+3]^{-1}$.
Then, $\Tr P \geq C + 4$, where $C$ is a constant depending only on $L$.
\end{theorem}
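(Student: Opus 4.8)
\emph{Proof sketch.} The plan is to run the argument of Theorem~\ref{thm.p} essentially verbatim, with the single whiskered leaf replaced by the three-vertex cluster, so that the additive constant acquired is the trace of the inverse of the cluster's grounded block. As in the proofs of Theorem~\ref{thm.bnd1} and Theorem~\ref{thm.p}, the positive-definite solution of the Lyapunov equation is $P = \frac{1}{2}L'[2:n+3]^{-1}$; since $L'$ is a graph Laplacian it is a (singular) $M$-matrix, so Theorem~\ref{thm.supermod} applies and the set function $K \mapsto \Tr L'[K]^{-1}$ is supermodular on $2^{[n+3]}$. The grown graph is connected, so every principal submatrix of $L'$ that deletes a nonempty vertex set --- in particular each submatrix appearing below, all of which at least delete the anchor vertex $1$ --- is positive definite, and the relevant inverses are well defined.

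Next I would apply supermodularity to the partition of $\{2,\dots,n+3\}$ that separates the original vertices from the attached cluster: take $K = \{2,\dots,n\}$ (the original vertices with the anchor removed) and $J = \{n+1,n+2,n+3\}$ (the leaf, the middle path vertex, and the end path vertex). Since $K\cap J = \emptyset$ and $\Tr L'[\emptyset]^{-1} = 0$, supermodularity yields
\begin{align}
  \Tr\left(L'[2:n+3]^{-1}\right) \;\geq\; \Tr\left(L'[2:n]^{-1}\right) + \Tr\left(L'[\{n+1,n+2,n+3\}]^{-1}\right).
\end{align}
If vertex $i \neq 1$ is the chosen attachment vertex, then $L'[2:n] = L[2:n] + 2e_ie_i^T$ --- the diagonal of $L'$ at $i$ carries the $+2$ from the two new cluster edges even though those edges are cut upon restriction to $\{2,\dots,n\}$ --- so the first term on the right depends only on $L$, and I would set $C = \min_i(\Tr([L[2:n] + 2e_ie_i^T]^{-1}))$.

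It remains to evaluate the cluster block. Because the leaf is adjacent only to the attachment vertex, the middle path vertex only to the attachment vertex and the end path vertex, and the end path vertex only to the middle path vertex, restricting $L'$ to $\{n+1,n+2,n+3\}$ removes all of these adjacencies and leaves the block-diagonal matrix
\begin{align}
  L'[\{n+1,n+2,n+3\}] = \left[\begin{array}{ccc} 1 & 0 & 0 \\ 0 & 2 & -1 \\ 0 & -1 & 1 \end{array}\right],
\end{align}
whose inverse is block-diagonal with blocks $[1]$ and $\left[\begin{smallmatrix}1 & 1 \\ 1 & 2\end{smallmatrix}\right]$ (the $2\times 2$ block has unit determinant), hence has trace $1 + 3 = 4$. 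Combining the two displays gives $\Tr P = \Tr(L'[2:n+3]^{-1}) \geq C + 4$, which is the claim. I do not expect a genuine obstacle here: the proof is structurally identical to that of Theorem~\ref{thm.p}, and the only points needing care are checking that $L'[2:n] = L[2:n] + 2e_ie_i^T$ is positive definite (immediate, being the sum of the positive-definite grounded Laplacian of the connected graph $\graph$ and a positive-semidefinite rank-one term) and keeping straight that restriction to a subset preserves the full $L'$-diagonal, which is exactly what produces the $2e_ie_i^T$ correction as well as the diagonal entries $2$ and $1$ inside the cluster block.
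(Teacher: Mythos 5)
Your proof is correct and follows essentially the same route as the paper's: both apply the supermodularity of $K\mapsto\Tr L'[K]^{-1}$ from Theorem~\ref{thm.supermod} to split $\{2,\dots,n+3\}$ into the original grounded vertices and the three cluster vertices, identify $L'[2:n]=L[2:n]+2e_ie_i^T$, and evaluate the cluster block's inverse trace as $4$. Your version merely spells out details the paper leaves implicit (the explicit $3\times3$ block and its inverse, positive definiteness of the grounded submatrices), so there is nothing to correct.
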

\begin{proof}
    \begin{align}
    \Tr(P) &= \Tr(L'[2:n+3]^{-1}) \\
&\geq \Tr(L'[2:n]^{-1}) + \Tr(L'[n+1:n+3]^{-1})\\
&= \Tr([L[2:n]+2e_ie_i^T]^{-1}) + 4 \geq C+4,
  \end{align}
where $i$ is the index of the attachment node chosen, and $C=\min_i(\Tr([L[2:n]+2e_ie_i^T]^{-1}))$ is a constant depending only on $L$.
\end{proof}

\section{ALGORITHM IMPLEMENTATION}\label{sec.alg}
In this section, we show examples of the optimization problems discussed in the previous section.

The optimization problems~\eqref{prob.search} and \eqref{prob.search2} were implemented using \texttt{cvx}~\cite{cvx,Grant2008}.
An additional relaxation method used to solve problems~\eqref{prob.1} and~\eqref{prob.search2} discussed in~\cite{Ghosh2006}, known as the \emph{perturbation heuristic}, was also implemented for the purpose of comparison.
At each iteration of the heuristic algorithm, the node cluster is attached to one node chosen by selecting the node with the largest value of $(v_i - v_{n+1})^2$, where $v$ satisfies $L'v = \lambda_2 v$.
Here, $v_{n+1}$ is the entry of $v$ corresponding to the node in the cluster attaching the cluster to node $i$.

If there is more than one node attaching the cluster to the node in the graph, then without loss of generality, denote these (say, $l$) nodes as $n+1,\dots,n+l$.
Then, the perturbation heuristic is to find the node $i\in[n]$ maximizing $\sum_{j=1}^l(v_i-v_{n+j})^2$ at each iteration.

\begin{figure}
  \centering
  \includegraphics[width=0.75\columnwidth]{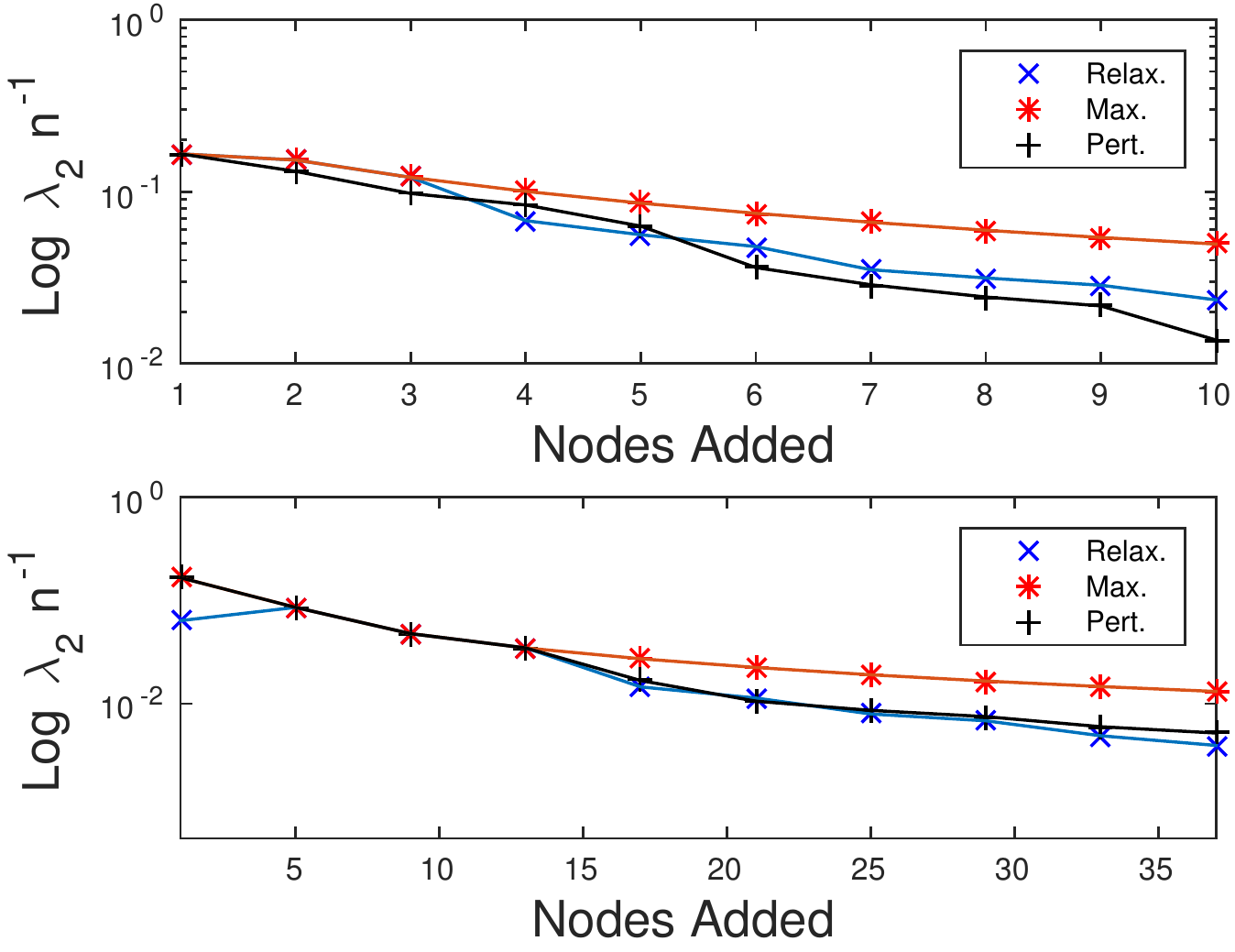}
  \caption{Max-$\protect\lambda_2$ algorithm results for adding leaves (top) and path clusters (bottom), using the convex relaxation ($\protect\times$), exhaustive search (\protect\textasteriskcentered) and perturbation heuristic ($+$).}
\label{fig.alg}
\end{figure}
\begin{figure}
  \centering
  \includegraphics[width=0.50\columnwidth]{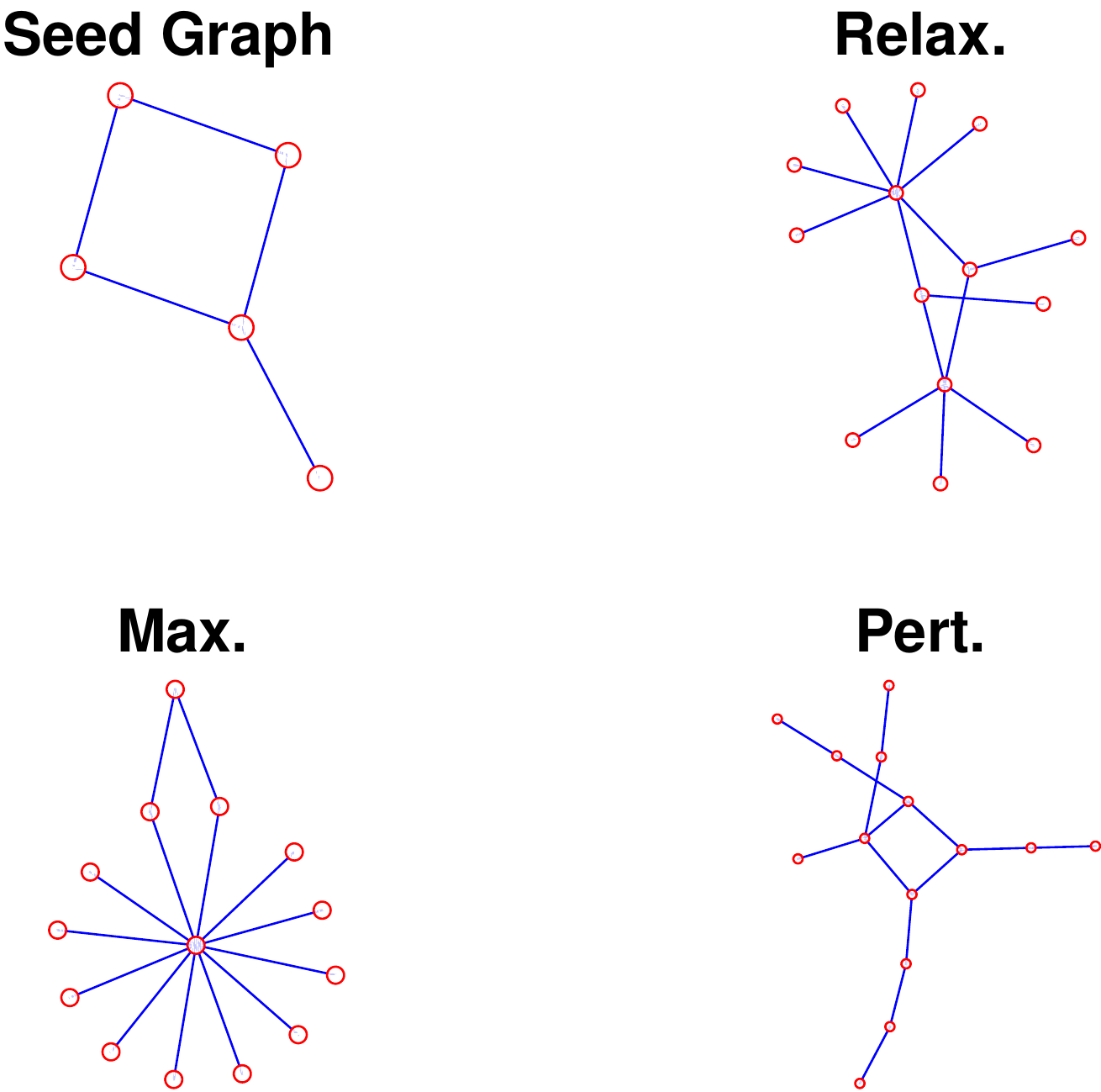}
\caption{Seed graph, and final graph after 9 iterations of the leaf-adding problem using the SDP relaxation~\protect\eqref{prob.1}, exhaustive search over problem~\protect\eqref{prob.search} and the perturbation heuristic.}
\label{fig.leaf}
\end{figure}
\begin{figure}
  \centering
  \includegraphics[width=0.50\columnwidth]{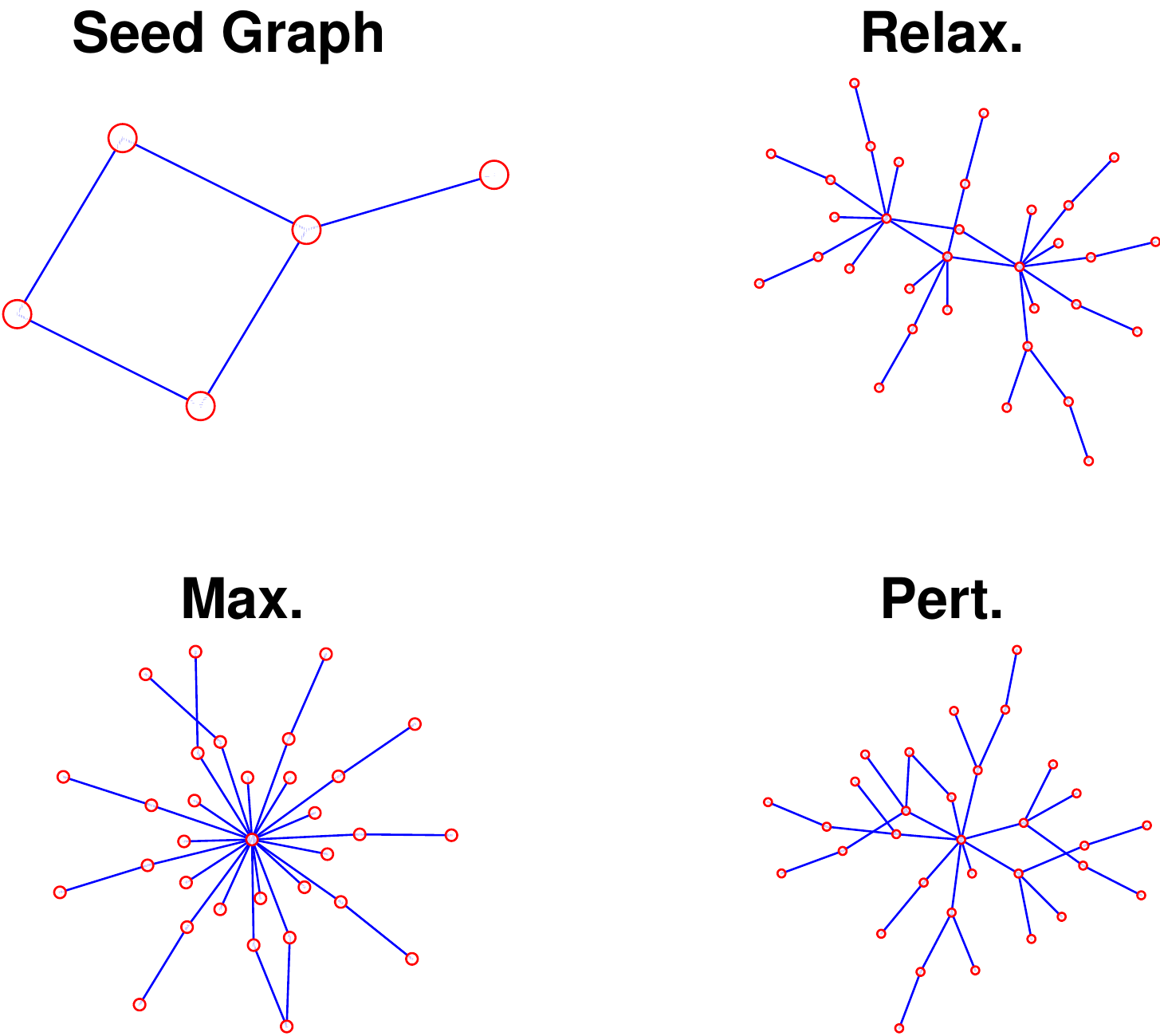}
\caption{Seed graph, and final graph after 9 iterations of the path-cluster-adding problem using the SDP relaxation~\protect\eqref{prob.2}, exhaustive search over problem~\protect\eqref{prob.search2} and the perturbation heuristic.}
\label{fig.path}
\end{figure}

The results of running these algorithms for 9 iterations are shown in Figures~\ref{fig.alg}, \ref{fig.leaf}, and \ref{fig.path}.
The seed graphs, and final graphs after 9 iterations for each of the three techniques (exhaustive search, convex relaxation and perturbation heuristic) are shown in Figure~\ref{fig.leaf} for adding a single leaf, and in Figure~\ref{fig.path} for adding the path cluster.
For both cases, the convex relaxations (problems~\eqref{prob.1} and~\eqref{prob.2}) perform reasonably well and pick out slightly suboptimal solutions, as seen in Figure~\ref{fig.alg}.
\section{CONCLUSIONS AND FUTURE WORKS}\label{sec.conc}
In this paper, we explored methods of constructing graphs by iterating a procedure that preserves controllability.
In Theorem~\ref{thm.whisker}, we showed that adding a leaf to every node preserves controllability, and in Theorem~\ref{thm.path} we showed that adding a cluster of two paths (of length 1 and 2) to each node also preserves controllability. We provided bounds on the performance of the resulting graph using submodularity.
We obtained general conditions for preserving controllability under iterative graph growing in Theorem~\ref{thm.concat}.

An interesting area of further work is to classify all graph clusters that one can attach to all nodes in the network that preserve controllability; in other words describe the matrix $L'$ appearing in Theorem~\ref{thm.concat} in terms of graph objects.
It would also be worth exploring the combination of the node-addition algorithms presented in the paper with edge-adding algorithms, for example maximizing $\lambda_2$~\cite{Ghosh2006} and adding edges to make cycles for more robust consensus~\cite{Zelazo2013}.

\section{ACKNOWLEDGMENTS}

MHdB thanks Airlie Chapman for useful conversations on submodular optimization.



\end{document}